\documentclass[12pt]{amsart}

%%%%Standard Packages%%%%

\usepackage{amsfonts,amssymb,enumerate,color,bm,hhline,makecell,array}
\usepackage{array}
\usepackage{mathrsfs}
\usepackage{comment}
\usepackage[all,ps,cmtip]{xy} 
\usepackage[normalem]{ulem}
\usepackage[colorlinks=true,citecolor=blue, urlcolor=blue, linkcolor=blue]{hyperref}
\usepackage{amscd}
\usepackage[shortlabels]{enumitem}
\usepackage{tensor}
\usepackage{extarrows}
\usepackage{mathtools}
\usepackage{tikz}
\usetikzlibrary{shapes}
\usetikzlibrary{arrows}
\usetikzlibrary{positioning}
\usetikzlibrary{calc}
\usepackage{tkz-euclide}
\usepackage{lipsum}
\usepackage{mwe}
\usepackage{wrapfig}
\usepackage{mathrsfs}

%%%%PageSettings%%%%

\setlength{\parindent}{1 cm}
\setlength{\textwidth}{6.6 in}
\setlength{\topmargin} {-.4 in}
\setlength{\evensidemargin}{0 in}
\setlength{\oddsidemargin}{0 in}
\setlength{\footskip}{.1 in}
\setlength{\headheight}{.3 in}
\setlength{\textheight}{8.9 in}
\setlength{\parskip}{1 mm}

%%%%MathbbLetters%%%%

\def\C{\ensuremath{\mathbb{C}}}

\def\P{\ensuremath{\mathbb{P}}}
\def\Q{\ensuremath{\mathbb{Q}}}
\def\R{\ensuremath{\mathbb{R}}}
\def\Z{\ensuremath{\mathbb{Z}}}

%%%%MathcalLetters%%%%

\def\cA{\ensuremath{\mathcal A}}

\def\cC{\ensuremath{\mathcal C}}
\def\cD{\ensuremath{\mathcal D}}

\def\cF{\ensuremath{\mathcal F}}

\def\cH{\ensuremath{\mathcal H}}

\def\cM{\ensuremath{\mathcal M}}

\def\cO{\ensuremath{\mathcal O}}
\def\cP{\ensuremath{\mathcal P}}

\def\cT{\ensuremath{\mathcal T}}

\def\cW{\ensuremath{\mathcal W}}

\def\cZ{\ensuremath{\mathcal Z}}

%%%%MathbfLetters%%%%

%%%%MathfrakLetters%%%%

%%%%MathSymbols%%%%

\def\ch{\mathop{\mathrm{ch}}\nolimits}

\newcommand{\CHn}[1]{\mathop{\mathrm{CH}^{#1}_{\mathrm{num}}}\nolimits}

\def\Coh{\mathop{\mathrm{Coh}}\nolimits}

\def\Db{\mathop{\mathrm{D}^{\mathrm{b}}}\nolimits}
\def\deg{\mathop{\mathrm{deg}}\nolimits}

\def\Ext{\mathop{\mathrm{Ext}}\nolimits}

 % means functor not the scheme

\def\Hom{\mathop{\mathrm{Hom}}\nolimits}

\def\Ker{\mathop{\mathrm{Ker}}\nolimits}

\def\Ku{\mathcal{K}u}

\def\NS{\mathop{\mathrm{NS}}\nolimits}

\def\Pic{\mathop{\mathrm{Pic}}\nolimits}
\def\Pos{\mathop{\mathrm{Pos}}}

 % means functor not the scheme

\def\rk{\mathop{\mathrm{rk}}}

\def\Stab{\mathop{\mathrm{Stab}}\nolimits}

%%%%Arrows%%%%

\def\into{\ensuremath{\hookrightarrow}}

%%%%Various%%%%

\def\blank{\underline{\hphantom{A}}}

%%%%Theorems%%%%

\theoremstyle{plain}
\newtheorem{Thm}{Theorem}[section]
\newtheorem{Prop}[Thm]{Proposition}

\newtheorem{Cor}[Thm]{Corollary}

\newtheorem{Ques}[Thm]{Question}

\newtheorem{Con}[Thm]{Conjecture}
\newtheorem*{Ques*}{Question}

\theoremstyle{definition}
\newtheorem{Def}[Thm]{Definition}
\newtheorem{Rem}[Thm]{Remark}

\newtheorem{Ex}[Thm]{Example}

% To include the section number in the equation numbering:
\numberwithin{equation}{section}

\usepackage{todonotes}
%\usepackage[backref]{hyperref}

%MathSubjClass2020
\makeatletter
\@namedef{subjclassname@2020}{%
  \textup{2020} Mathematics Subject Classification}
\makeatother

%%%%%%%%%%

\begin{document}

\title[Wall-crossing in algebraic geometry]{The unreasonable effectiveness of wall-crossing in algebraic geometry}

\author{Arend Bayer}
\address{\parbox{0.9\textwidth}{University of Edinburgh\\[1pt]
School of Mathematics and Maxwell Institute\\[1pt]
James Clerk Maxwell Building\\[1pt]
Peter Guthrie Tait Road, Edinburgh, EH9 3FD, United Kingdom
\vspace{1mm}}}
\email{arend.bayer@ed.ac.uk}

\author{Emanuele Macr\`i}
\address{\parbox{0.9\textwidth}{Universit\'e Paris-Saclay\\[1pt]
CNRS, Laboratoire de Math\'ematiques d'Orsay\\[1pt]
Rue Michel Magat, B\^at. 307, 91405 Orsay, France
\vspace{1mm}}}
\email{emanuele.macri@universite-paris-saclay.fr}

\subjclass[2020]{14F08, 14F17, 14H51, 14J28, 14J32, 14J42}
\keywords{Bridgeland stability conditions, K3 categories, hyper-Kähler varieties, Calabi--Yau threefolds, Brill--Noether theory, cubic fourfolds, Donaldson--Thomas theory}
\thanks{This work was partially supported by EPSRC grant EP/R034826/1, and the ERC Grants ERC-2018-CoG-819864-WallCrossAG and ERC-2020-SyG-854361-HyperK}

\begin{abstract}
We survey applications of Bridgeland stability conditions in algebraic geometry and discuss open questions for future research.
\end{abstract}

\maketitle

%%%%%%%%%%

\section{Introduction}\label{sec:intro}

Bridgeland stability conditions and wall-crossing have provided answers to many questions in algebraic geometry a priori unrelated to derived categories, including  hyper-K\"ahler varieties---their rational curves, their birational geometry, their automorphisms, and their moduli spaces---, Brill--Noether questions, Noether--Lefschetz loci, geometry of cubic fourfolds, or higher rank Donaldson--Thomas theory.
Our goal is to answer the question: why? What makes these techniques so effective, and what exactly do they add beyond, for example, classical vector bundle techniques?

The common underlying strategy can be roughly summarized as follows.
For each stability condition on a derived category $\Db(X)$ of an algebraic variety $X$ and each numerical class, moduli spaces of semistable objects in $\Db(X)$ exist as proper algebraic spaces. 
This formalism includes many previously studied moduli spaces: moduli spaces of Gieseker- or slope-stable sheaves, of stable pairs, or of certain equivalence classes of rational curves in $X$.
The set of stability conditions on $\Db(X)$  has the structure of a complex manifold; when we vary the stability condition, stability of a given object  only changes when we cross the walls of a well-defined wall and chamber structure.

The typical ingredients when approaching a problem with stability conditions are the following.
\begin{itemize}[leftmargin=!,labelwidth=\widthof{(point of interest)}, align=left]
    \item[(large volume)] There is a point in the space of stability conditions where stable objects have a ``classical'' interpretation, e.g.~as Gieseker-stable sheaves.
    \item[(point of interest)] There is a point in the space of stability conditions where stability has strong implications, e.g.~vanishing properties, or even there is no semistable object of a given numerical class.
    \item[(wall-crossing)] It is possible to analyze the finite set of walls between these two points, and how stability changes when crossing each wall.
\end{itemize}
In general, it is quite clear from the problem which are the points of interest, and the main difficulty consists of analyzing the walls.
In the ideal situation, which leads to sharp exact results, these walls can be characterized purely numerically; there are only a few such ideal situations, K3 surfaces being one of them.
Otherwise, the study of walls can get quite involved, even though there are now a number of more general results available, e.g.~a wall-crossing formula for counting invariants arising from moduli spaces.

We illustrate the case of K3 surfaces, or more generally \emph{K3 categories}, in Section~\ref{sec:Constructions1}, with applications to hyper-K\"ahler varieties, to Brill--Noether theory of curves, and to the geometry of special cubic fourfolds.
The study of other surfaces or the higher dimensional case becomes  more technical, and the existence of Bridgeland stability conditions is not yet known in full generality. There are weaker notions of stability, which in the threefold case already lead to striking results.
We give an overview of this circle of ideas in Section~\ref{sec:threefolds}, along with three applications related to curves.
We give a brief introduction to stability conditions in Section~\ref{sec:DerivedCategoryStability}, and pose some questions for future research in Section~\ref{sec:Further}.

Derived categories of coherent sheaves on varieties have been hugely influential in recent years; we refer to~\cite{BO:ICM,Bri:ICM,Kuz:ICM,Toda:ICM} for an overview of the theory. Moduli spaces of sheaves on K3 surfaces have largely been influenced by~\cite{Muk:BundlesK3}; we refer to~\cite{Muk:ICM} for an overview of applications of these techniques and to~\cite{Huy:ICM} for the higher-dimensional case of hyper-K\"ahler manifolds.
For the original motivation from physics, we refer to~\cite{Kon:ICM,Dou:ICM}.

Our survey completely omits the quickly developing theory and applications of stability conditions on Kuznetsov components of Fano threefolds. We also will not touch the rich subject of extra structures on spaces of stability conditions, developed for example in the  foundational papers~\cite{BS:QuadraticDifferentials,Bri:JoyceStructures}; we also refer to~\cite{Smi:ICM} for a symplectic perspective.

%%%%%%%%%%

\section{Stability conditions on derived categories}\label{sec:DerivedCategoryStability}

Recall slope-stability for sheaves on an integral projective curve $C$: we set  
\[
\mu(E) = \frac{\deg E}{\rk E} \in (-\infty, +\infty], \quad \text{visualized by} \quad
Z(E) = - \deg E + i\, \rk E,
\]
and call a sheaf \emph{slope-semistable} if every subsheaf $F \subset E$ satisfies $\mu(F) \le \mu(F)$.
The set of semistable sheaves of fixed rank and degree is \emph{bounded} and can be parameterized by a projective moduli space.
Moreover, semistable sheaves generate $\Coh(C)$, in the sense that every sheaf $E$ admits a (unique and functorial) \emph{Harder-Narasimhan (HN) filtration}
\[ 0 = E_0 \subset E_1 \subset E_2 \subset \dots \subset E_m = E \]
with $E_l/E_{l-1}$ semistable, and $\mu(E_1/E_0) > \mu(E_2/E_1) > \dots > \mu(E_m/E_{m-1})$.

How to generalise this to a variety $X$ of dimension $n \geq 2$? Given a polarisation $H$, one can define the slope $\mu_H$ using $H^{n-1}\cdot \ch_1(E)$ as the degree. To distinguish e.g.~the slope of the structure sheaf $\cO_X$ from that of an ideal sheaf $\mathscr{I}_x \subset \cO_X$ for $x \in X$, we can further refine the notion of slope and use lower degree terms of the Hilbert polynomial
$p_E(m) = \chi(E(mH))$ as successive tie breakers; this yields Gieseker-stability.

One of the key insights in Bridgeland's notion of stability conditions  introduced in~\cite{Bridgeland:Stab} 
is that instead we can in fact still use a notion of slope-stability, defined as the quotient of ``degree'' by ``rank''. The price we have to pay is to replace $\Coh(X)$ by another abelian subcategory $\cA$ of the bounded derived category $\Db(X)$ of coherent sheaves on $X$, and to generalise the notions of ``degree'' and ``rank'' (combined into a central charge $Z$ as above). 

To motivate the definition, consider again 
slope-stability for a curve $C$. First, 
for $\phi \in (0, 1]$, let $\cP(\phi) \subset \Coh(C) \subset \Db(C)$ be the category of slope-semistable sheaves $E$ with $Z(E) \in \R_{>0}\cdot e^{i\pi\phi}$, i.e., of slope $\mu(E) = - \cot(\pi \phi)$, and let $\cP(\phi + n) = \cP(\phi)[n]$ for $n \in \Z$ be the set of 
\emph{semistable objects of phase $\phi + n$}.  Every complex $E \in \Db(C)$ has a filtration into its cohomology objects $\cH^l(E)[-l]$. We can combine this with the classical HN filtration of $\cH^l(E)[-l]$ for each $l$ to obtain a finer filtration for $E$ where every filtration quotient is semistable, i.e., an object of $\cP(\phi)$ for $\phi \in \R$. The properties of this structure are axiomatised by conditions \eqref{eq:Bridgeland1}--\eqref{eq:HN} in Definition \ref{def:Bridgeland} below. But crucially it can always be obtained from slope-stability in an abelian category $\cA$; we just have to generalise the setting $\cA \subset \Db(\cA)$ to $\cA \subset \cD$ being the ``heart of a bounded t-structure'' in a triangulated category.

When combined with the remaining conditions in Definition~\ref{def:Bridgeland}, the main payoffs are the strong deformation and wall-crossing properties of Bridgeland stability conditions. Given any small deformation of ``rank'' and ``degree'' (equivalently, of the central charge $Z$), we can accordingly adjust the abelian category $\cA$ (or, equivalently, the set of semistable objects $\cP$) and obtain a new stability condition.  Along such a deformation, moduli spaces of semistable objects undergo very well-behaved wall-crossing transformations.

\subsection{Bridgeland stability conditions}\label{subsec:Bridgeland}

We now consider more generally an  \emph{admissible subcategory} $\cD$ of $\Db(X)$ for a smooth and proper variety $X$ over a field $k$: a full triangulated subcategory whose inclusion $\cD \hookrightarrow \Db(X)$ admits both a left and a right adjoint. For instance, $\cD=\Db(X)$; otherwise
we think of $\cD$ as a smooth and proper \emph{non-commutative variety}.

We fix a finite rank free abelian group $\Lambda$ and a group homomorphism
\[
v\colon K_0(\cD)\to \Lambda
\]
from the Grothendieck group $K_0(\cD)$ of $\cD$ to $\Lambda$.

\begin{Def}\label{def:Bridgeland}
A \emph{Bridgeland stability condition} on $\cD$ with respect to $(v,\Lambda)$ is a pair $\sigma=(Z,\cP)$ where
\begin{itemize}
    \item $Z\colon \Lambda \to \C$ is a group homomorphism, called \emph{central charge}, and
    \item $\cP=\bigl(\cP(\phi)\bigr)_{\phi\in\R}$ is a collection of full additive subcategories $\cP(\phi)\subset \cD$
\end{itemize}
satisfying the following conditions:
\begin{enumerate}
    \item\label{eq:Bridgeland1} for all nonzero $E\in\cP(\phi)$ we have $Z(v(E))\in \R_{>0}\cdot e^{i\pi\phi}$;
    \item\label{eq:Bridgeland2} for all $\phi\in\R$ we have $\cP(\phi+1)=\cP(\phi)[1]$;
    \item\label{eq:Bridgeland3} if $\phi_1>\phi_2$ and $E_j\in\cP(\phi_j)$, then $\Hom(E_1,E_2)=0$;
    \item\label{eq:HN}  (Harder--Narasimhan filtrations) for all nonzero $E\in\cD$ there exist real numbers $\phi_1>\phi_2>\dots>\phi_m$ and a finite sequence of morphisms
    \[
    0=E_0 \xrightarrow{s_1} E_1 \xrightarrow{s_2}\dots \xrightarrow{s_m} E_m=E
    \]
    such that the cone of $s_l$ is a non-zero object of $\cP(\phi_l)$;
    \item\label{eq:SupportProperty} (support property) there exists a quadratic form $Q$ on $\Lambda_{\R} = \Lambda \otimes \R$ such that
    \begin{itemize}
        \item the kernel of $Z$ is negative definite with respect to $Q$, and
        \item for all $E\in\cP(\phi)$ for any $\phi$ we have $Q(v(E))\geq0$;
    \end{itemize}
    \item\label{eq:openness} (openness of stability) the property of being in $\cP(\phi)$ is open in families of objects in $\cD$ over any scheme;
    \item\label{eq:boundedness} (boundedness) objects in $\cP(\phi)$ with fixed class $v\in\Lambda$ are parameterized by a $k$-scheme of finite-type.
\end{enumerate}
\end{Def}

An object of the subcategory $\cP(\phi)$ is called \emph{$\sigma$-semistable} of phase $\phi$, and  \emph{$\sigma$-stable} if it admits no non-trivial suboject in $\cP(\phi)$.
The set of Bridgeland stability conditions on $\cD$ is denoted by $\Stab(\cD)$, where we omit the dependence on $(v,\Lambda)$ from the notation.

Conditions~\eqref{eq:Bridgeland1}--\eqref{eq:HN} form the original definition in~\cite{Bridgeland:Stab} and ensure we have a notion of slope-stability.
The support property is necessary to show that stability conditions vary continuously (see Theorem~\ref{thm:BridgelandDeformationThm} below) and admit a well-behaved wall and chamber structure: fundamentally, this is due to the simple linear algebra consequence that given $C > 0$, there are only finitely many classes $w \in \Lambda$ of semistable objects with $|Z(w)| < C$.
Conditions~\eqref{eq:openness} and~\eqref{eq:boundedness} were introduced in \cite{Toda:K3,Toda:ICM}, with similar versions appearing previously in \cite[Section 3]{Kontsevich-Soibelman:stability}; they guarantee the existence of moduli spaces of semistable objects.

\begin{Thm}[Bridgeland Deformation Theorem]\label{thm:BridgelandDeformationThm}
The set $\Stab(\cD)$ has the structure of a complex manifold, such that the natural map
\[
\cZ\colon \Stab(\cD) \to \Hom(\Lambda, \C), \quad (Z, \cP) \mapsto Z
\]
is a local isomorphism at every point. 
\end{Thm}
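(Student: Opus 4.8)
The plan is to show that $\cZ$ is a local homeomorphism onto an open subset of $\Hom(\Lambda,\C)$, and then to use this to transport the complex structure from the target. The heart of the argument is a deformation result: given a stability condition $\sigma = (Z,\cP)$ and a sufficiently small perturbation $Z'$ of the central charge (measured in the operator norm relative to the quadratic form $Q$ from the support property), one must produce a stability condition $\sigma' = (Z',\cP')$ with the \emph{same} stable objects up to a small change of phase, and show $\sigma'$ is the unique such lift. The support property is exactly what makes this possible: because the kernel of $Z$ is negative definite for $Q$ while $Q(v(E)) \geq 0$ on semistable objects, the classes of $\sigma$-semistable objects all lie in a region on which $Z$ is bounded below in absolute value by a positive multiple of $\|v(E)\|$, so a small perturbation moves every phase $\phi(E) = \frac{1}{\pi}\arg Z(v(E))$ by less than any prescribed $\epsilon$.

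The key steps, in order: (1) Fix $\sigma$ and choose $\epsilon < \tfrac14$. Define, for each interval $I = (\phi-\epsilon, \phi+\epsilon)$, the quasi-abelian (or abelian, when $I$ has length $1$) subcategory $\cP(I) \subset \cD$ generated by the $\cP(\psi)$ with $\psi \in I$; these have finite length in a suitable sense. (2) For $Z'$ close to $Z$, check that $Z'$ maps the nonzero objects of each $\cP((\phi-\epsilon,\phi+\epsilon])$ into a half-plane of the form $\R_{>0}\cdot e^{i\pi(\phi',\phi'+1]}$, so that $Z'$ defines a stability function on this quasi-abelian category; then run Harder--Narasimhan inside these slices to manufacture the refined slicing $\cP'$. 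This is the technically delicate combinatorial part. (3) Verify that $(Z',\cP')$ satisfies axioms \eqref{eq:Bridgeland1}--\eqref{eq:HN}; re-derive the support property for $\sigma'$ using the \emph{same} $Q$ (possibly after shrinking the neighbourhood), using that the semistable classes have not changed, only their phases; and check that openness \eqref{eq:openness} and boundedness \eqref{eq:boundedness} are inherited, since the set of semistable objects of a fixed class is unchanged for small enough perturbation. (4) Prove uniqueness of the lift: any stability condition with central charge $Z'$ whose phases are $\epsilon$-close to those of $\sigma$ must have exactly the slicing $\cP'$ just constructed, because the HN filtration with respect to such data is forced. This gives a well-defined local inverse to $\cZ$ on a neighbourhood of $Z$.

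With the bijection between a small neighbourhood of $\sigma$ in $\Stab(\cD)$ and an open subset $U \subset \Hom(\Lambda,\C)$ in hand, one equips $\Stab(\cD)$ with the initial topology making $\cZ$ continuous, checks (using step 4) that the local inverses are continuous, so $\cZ$ is a local homeomorphism, and then declares the charts $\cZ|_U$ to be holomorphic; the transition maps are restrictions of the identity on $\Hom(\Lambda,\C)$ and hence trivially holomorphic, so $\Stab(\cD)$ becomes a complex manifold of dimension $\rk\Lambda$ and $\cZ$ is a local biholomorphism. One should also record that $\Stab(\cD)$ is Hausdorff, which follows from the support property by a separate short argument (two stability conditions with the same $Z$ and $\epsilon$-close phases coincide).

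I expect step (2)--(3) to be the main obstacle: constructing $\cP'$ requires showing the perturbed slices $\cP((\phi-\epsilon,\phi+\epsilon])$ are well-behaved enough (finite length, existence of HN filtrations for the perturbed stability function) and, more subtly, that the resulting global slicing genuinely satisfies the support property with a uniform choice of quadratic form — this last point is where Bridgeland's original argument needed the most care, and where the modern formulation via $Q$ pays off. Conditions \eqref{eq:openness} and \eqref{eq:boundedness} were not part of Bridgeland's original theorem, so one must also check they are open conditions along the deformation, but this is comparatively routine given that the underlying set of semistable objects is locally constant in $\sigma$.
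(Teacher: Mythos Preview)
The paper does not actually prove this theorem: it is a survey, and the discussion following the statement simply cites Bridgeland's original result \cite[Theorem 1.2]{Bridgeland:Stab} for conditions \eqref{eq:Bridgeland1}--\eqref{eq:SupportProperty} and \cite[Theorem 3.20]{Toda:K3}, \cite[Section 4.4]{PT15:bridgeland_moduli_properties} for the additional conditions \eqref{eq:openness}--\eqref{eq:boundedness}. Your sketch is essentially Bridgeland's original argument and is the correct high-level strategy for the core deformation result.

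There is, however, a genuine gap in your treatment of conditions \eqref{eq:openness} and \eqref{eq:boundedness}. You write that these are ``comparatively routine given that the underlying set of semistable objects is locally constant in $\sigma$'', and earlier that for the support property ``the semistable classes have not changed, only their phases''. Two problems. First, the set of semistable objects of a fixed class is \emph{not} locally constant in general: any $\sigma$ lying on a wall for some class has arbitrarily close neighbours with a different semistable locus---this is precisely the wall-crossing phenomenon. What is true is that every $\sigma'$-semistable object near $\sigma$ admits a filtration by $\sigma$-semistable pieces, and this (together with $Q$ remaining negative definite on $\Ker Z'$, as the paper notes) is what propagates the support property; your justification needs this correction. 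Second, and more importantly, the paper explicitly identifies preservation of openness of stability under deformation as ``the most difficult aspect'' of the extended theorem. Openness is a condition on \emph{families of objects over arbitrary base schemes}, not merely on which individual objects are semistable, and it does not follow from local constancy of the semistable locus even where that holds. Dismissing this step as routine is the main weakness of your proposal.
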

For conditions \eqref{eq:Bridgeland1}--\eqref{eq:SupportProperty}, this is a reformulation of Bridgeland's main result \cite[Theorem 1.2]{Bridgeland:Stab}. It says that $\sigma = (Z, \cP)$ can be deformed uniquely given a small deformation of $Z \leadsto Z'$, roughly as long as $Z'(E) \neq 0$ remains true for all $\sigma$-semistable objects $E$. (More precisely, any path where $Q$ remains negative definite on $\Ker Z'$ can be lifted uniquely to a path in $\Stab(\cD)$.)
With the additional conditions~\eqref{eq:openness} and~\eqref{eq:boundedness}, Theorem~\ref{thm:BridgelandDeformationThm} was proved in \cite[Theorem 3.20]{Toda:K3} and \cite[Section 4.4]{PT15:bridgeland_moduli_properties}, where the most difficult aspect  is to show that openness of stability is preserved under deformations.

The theory has been developed over an arbitrary base scheme in~\cite{families}.
%Roughly, a stability condition over a base is the datum of a stability condition on each fiber whose central charges are locally constant in families of objects, together with a uniform support property and an integrability condition over 1-dimensional bases.
A stability condition over a base is the datum of a stability condition on each fiber, such that families of objects over the base have locally constant central charges, satisfy openness of stability, and a global notion of HN filtration after base change to a one-dimensional scheme; moreover, we impose a global version of the support property and of boundedness.
An analogue of Theorem~\ref{thm:BridgelandDeformationThm} holds; differently to the absolute case, assuming the support property is not enough and the proof requires the additional conditions~\eqref{eq:openness} and~\eqref{eq:boundedness}.

The construction of Bridgeland stability conditions is discussed in Section~\ref{sec:threefolds}; in particular, they exist on surfaces and certain threefolds.

\subsection{Stability conditions as polarizations}\label{subsec:ModuliSpaces}

It was first suggested in the arXiv version of \cite{Bridgeland:K3} to think of $\sigma$ as a polarization of the non-commutative variety $\cD$. We now review some results partly justifying this analogy. A polarisation of a variety $X$ by an ample line bundle $H$ gives projective moduli spaces of $H$-Gieseker-stable sheaves; the following two results provide an analogue.

\begin{Thm}[Toda, Alper, Halpern-Leistner, Heinloth] \label{thm:modulispaces}
Given $\sigma \in \Stab(\cD)$ and $v \in \Lambda$, there is a finite type Artin stack $\cM_{\sigma}(v)$ of $\sigma$-semistable objects $E$ with $v(E) = v$ and fixed phase. In characteristic zero, it has a proper \emph{good moduli space} $M_\sigma(v)$ in the sense of Alper.
\end{Thm}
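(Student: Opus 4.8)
The plan is to produce the stack $\cM_\sigma(v)$ by exhibiting it as an open substack of the stack of all objects in $\cD$, and then to invoke the existence theory for good moduli spaces due to Alper--Halpern-Leistner--Heinloth. First I would recall that the stack $\underline{\mathcal{M}}_{\cD}$ parameterizing all objects of $\cD$ (equivalently, of $\Db(X)$ since $\cD$ is admissible and cut out by semiorthogonality, which is an open-and-closed type condition on the perfect complexes) is a locally finite type algebraic stack, locally of finite presentation with affine diagonal; this is the Toda--Lieblich type representability result, building on work of Lieblich on the stack of complexes and its refinement in \cite{PT15:bridgeland_moduli_properties}. Fixing the numerical class $v \in \Lambda$ cuts out an open and closed substack $\underline{\mathcal{M}}_{\cD}(v)$, since $v$ is locally constant in families by definition of the central charge factoring through $\Lambda$.

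Next I would carve out the semistable locus. By condition \eqref{eq:openness} (openness of stability), the locus of $\sigma$-semistable objects of phase $\phi$ is open in any family, hence defines an open substack $\cM_\sigma(v) \subset \underline{\mathcal{M}}_{\cD}(v)$; one fixes the phase $\phi$ by the requirement $Z(v) \in \R_{>0} e^{i\pi\phi}$, so this is automatic once $v$ is fixed and nonzero. By condition \eqref{eq:boundedness} (boundedness), this substack is of finite type over $k$. The support property \eqref{eq:SupportProperty} then does the crucial bookkeeping: it guarantees that for a fixed $v$ there are only finitely many Harder--Narasimhan/Jordan--Hölder classes involved, so that the HN and JH filtrations in families are well-controlled — in particular the S-equivalence relation has closed orbits parameterized sensibly. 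This is what makes the stack suitable input for the abstract existence machinery.

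For the good moduli space, I would apply the criterion of Alper--Halpern-Leistner--Heinloth (as developed in their work on existence of good moduli spaces, and its application to Bridgeland moduli in \cite{PT15:bridgeland_moduli_properties} and related papers): an algebraic stack of finite type over a field of characteristic zero, with affine diagonal, admits a good moduli space provided it satisfies \emph{$\Theta$-reductivity} and \emph{S-completeness} (equivalently unpunctured inertia and existence of limits along the relevant test stacks $\Theta$ and $\overline{\mathrm{ST}}$). Here $\Theta$-reductivity corresponds precisely to existence and uniqueness of Harder--Narasimhan filtrations — our condition \eqref{eq:HN}, suitably promoted to families using \eqref{eq:openness} — and S-completeness corresponds to the existence and uniqueness (up to S-equivalence) of Jordan--Hölder filtrations, which again follows from the axioms together with the support property controlling finiteness. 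Properness of $M_\sigma(v)$ follows from the valuative criterion: one checks that over a DVR, a family of semistable objects on the punctured disc extends (after base change) to a family of semistable objects, using HN filtrations to produce a limit and JH filtrations plus $\Theta$/$S$-completeness to show it is unique up to S-equivalence — this is the analogue of Langton's theorem for semistable sheaves.

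The main obstacle — and the technically deepest point — is verifying S-completeness, i.e., producing a limiting semistable object over the ``S-test stack'' $\overline{\mathrm{ST}}$ from given boundary data, and doing so with the correct uniqueness. Concretely this is where one must run a Langton-style induction inside the abelian category $\cP(\phi)$, repeatedly modifying a candidate limit to decrease an instability invariant, and one must know this terminates; that termination is exactly where the support property and boundedness are indispensable, since they bound the possible classes of subobjects and quotients that can appear. Establishing openness of semistability \eqref{eq:openness} in families over a general base — rather than assuming it — is the other genuinely hard input, and it is precisely the point emphasized after Theorem~\ref{thm:BridgelandDeformationThm}; here we are allowed to take it as a hypothesis. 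Given these two ingredients, the rest of the argument is the by-now-standard formalism of good moduli spaces, and \emph{properness} is a relatively clean consequence of the valuative criterion combined with the same HN/JH technology.
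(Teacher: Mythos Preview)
Your proposal is correct and follows exactly the route the paper takes: the paper's proof consists of two citations---\cite[Theorem~3.20]{Toda:K3} for the finite-type Artin stack and \cite[Theorem~7.25]{AHLH:good_moduli} (see also \cite[Theorem~21.24]{families}) for the proper good moduli space---and you have unpacked the strategy behind those references. One small caveat: your heuristic identification of $\Theta$-reductivity with HN filtrations and S-completeness with JH filtrations is morally right but not literally how \cite{AHLH:good_moduli} proceeds; both are extension properties over specific test stacks, and their verification (and properness) runs through a Langton-type semistable-reduction argument rather than directly through axioms~\eqref{eq:HN}.
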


\begin{proof}
The existence as Artin stack is~\cite[Theorem 3.20]{Toda:K3}, while the existence of a good moduli space is proven in \cite[Theorem 7.25]{AHLH:good_moduli}; see also \cite[Theorem 21.24]{families}.
\end{proof}

\begin{Thm} \label{thm:projectivemoduli}
The algebraic space $M_\sigma(v)$ admits a Cartier divisor $\ell_\sigma$ that has strictly positive degree on every curve. In characteristic zero, if  $M_\sigma(v)$ is smooth, or more generally if it has $\Q$-factorial log-terminal singularities, then $M_\sigma(v)$ is projective.
\end{Thm}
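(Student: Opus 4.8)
The plan is to construct $\ell_\sigma$ explicitly as a determinantal line bundle à la Donaldson--Mumford--Le Potier, adapted to Bridgeland stability, and then verify positivity by a pairing computation on curves inside the moduli space. Concretely, for a family $\cE$ of $\sigma$-semistable objects over a base $S$ with $v(\cE) = v$, one sets
\[
\ell_\sigma(\cE) \;=\; \det\!\left( (p_S)_{!}\bigl( p_X^* (\blank) \otimes \cE \bigr) \right)^{\vee}
\]
evaluated at a fixed auxiliary class $w \in K_0(\cD)_\R$ chosen so that the central charge pairing $\Imm\bigl(-Z(v)\overline{Z(\blank)}\bigr)$ vanishes on $v$ and is suitably normalized; this descends to a class on the good moduli space $M_\sigma(v)$ because on S-equivalent objects the determinant only depends on the Jordan--Hölder factors through $Z$. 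First I would recall, from \cite{BM:MMP} in the K3 case and its generalizations, the precise definition of this ``Bayer--Macrì divisor'' and the formula for its degree along a one-parameter family: if $\cC \to M_\sigma(v)$ is a projective integral curve with pullback family $\cE_\cC$, then
\[
\ell_\sigma \cdot \cC \;=\; \Imm\!\left( - \frac{Z\bigl(v(\Phi)\bigr)}{Z(v)} \right),
\]
where $\Phi = (p_{\cC})_! \cE_\cC$ is the ``Fourier--Mukai-type'' transform of the structure sheaf of the curve; the key point is that this quantity is manifestly $\geq 0$ for semistable $\cE_\cC$ and is strictly positive unless the family is S-equivalence-constant along $\cC$, i.e.\ unless $\cC$ is contracted in $M_\sigma(v)$.

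The heart of the argument, then, has two steps. Step one: positivity of $\ell_\sigma$ on curves. Here the input is that for a non-constant family over a curve the fibers cannot all be S-equivalent—because the good moduli space separates S-equivalence classes—so the associated ``phase drop'' forced by a Harder--Narasimhan/Jordan--Hölder comparison between the generic fiber and a special fiber makes the imaginary part above strictly positive. This uses in an essential way conditions~\eqref{eq:Bridgeland1} and~\eqref{eq:HN} of Definition~\ref{def:Bridgeland}, together with the support property~\eqref{eq:SupportProperty} to control the finitely many possible classes of destabilizing subobjects. Step two: upgrading ``$\ell_\sigma$ positive on curves'' to projectivity. Since $M_\sigma(v)$ is proper, by the Nakai--Moishezon criterion it suffices to know $\ell_\sigma$ is ample, and a divisor that is strictly positive on every curve on a proper algebraic space with mild singularities is ample provided one has a Kleiman-type criterion available—this is exactly where the hypothesis that $M_\sigma(v)$ be smooth, or have $\Q$-factorial log-terminal singularities, enters: for such spaces one has the Cone Theorem and the base-point-free theorem (Kawamata, Shokurov) at one's disposal, so a nef divisor positive on all curves (hence on the whole Mori cone) is semiample and, being strictly positive, ample. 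One also invokes here that $M_\sigma(v)$ is in fact a projective-over-affine, or Moishezon, object so that these birational-geometry tools apply; properness from Theorem~\ref{thm:modulispaces} plus $\Q$-factoriality does the job.

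The main obstacle I expect is Step two rather than Step one: turning a divisor that pairs positively with every curve into an \emph{ample} divisor is false for general proper algebraic spaces (the pairing can be positive on all curves without the divisor lying in the interior of the nef cone), and genuinely requires the Minimal Model Program machinery—specifically the Basepoint-Free Theorem—which is why the statement is hedged with the $\Q$-factorial log-terminal hypothesis and restricted to characteristic zero. A secondary technical point, but a real one, is checking that $\ell_\sigma$ genuinely \emph{descends} from the moduli stack $\cM_\sigma(v)$ to the good moduli space $M_\sigma(v)$: one must verify that the determinantal line bundle is trivial on the automorphism groups of polystable objects (Kempf's descent criterion), which again reduces to the central-charge computation showing the construction only sees the numerical class $v$. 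Once descent and the curve-pairing formula are in place, the positivity in Step one is essentially formal, and projectivity follows from the cited birational geometry results.
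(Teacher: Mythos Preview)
Your Step one is essentially the paper's approach: the construction of $\ell_\sigma$ as a determinantal class and the degree formula on curves is exactly the \emph{Positivity Lemma} of \cite{BM:proj} (cited as such in the paper, together with \cite[Theorem~21.25]{families}). The descent and S-equivalence points you raise are the right ones and are handled there.

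Step two, however, has a genuine circularity. You propose to invoke the Cone Theorem and the Basepoint-Free Theorem to pass from ``strictly positive on every curve'' to ``ample''. But those theorems are stated for \emph{projective} (or quasi-projective) varieties; applying them to the proper algebraic space $M_\sigma(v)$ presupposes the very projectivity you are trying to establish. Your escape hatch---``$M_\sigma(v)$ is Moishezon, so these tools apply''---does not work as stated: a strictly nef divisor need not be big, so strict nefness alone does not give Moishezon, and in any case the standard MMP package is not available on Moishezon algebraic spaces without further input. (There is also a smaller issue: even on a projective variety, the Basepoint-Free Theorem does not yield ``strictly nef $\Rightarrow$ ample'' without knowing something like $D-K_X$ nef and big; the implication you want is closer to a case of the Campana--Peternell conjecture.)

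The paper sidesteps all of this by citing \cite[Corollary~3.4]{Villalobos-Paz:moishezonProjectivity}, which proves precisely the needed statement: a separated finite-type algebraic space over a field of characteristic zero that is $\Q$-factorial with log-terminal singularities and carries a Cartier divisor of positive degree on every curve is quasi-projective. The argument there does ultimately rest on MMP ideas, but adapted to algebraic spaces (via Fujino's extension of the relevant vanishing and basepoint-freeness results); this adaptation is the actual content, and it is not something you can extract from the classical projective MMP by a one-line remark.
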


\begin{proof}
The existence of the Cartier divisor and its properties is the \emph{Positivity Lemma} in~\cite{BM:proj}, see also~\cite[Theorem 21.25]{families}. The projectivity follows from \cite[Corollary 3.4]{Villalobos-Paz:moishezonProjectivity}.
\end{proof}

As studied extensively in Donaldson theory in the 1990s, the Gieseker-moduli spaces may change as $H$ crosses walls in the ample cone.

\begin{Thm}\label{thm:wallchambers}
Fix a vector $v \in \Lambda$. Then there exists a locally finite union $\cW_v$ of real-codimension one submanifolds in $\Stab(\cD)$, called walls, such that on every connected component $\cC$  of the complement $\Stab(\cD) \setminus \cW_v$, called a chamber,  the moduli space $M_\sigma(v)$ is independent of the choice $\sigma \in \cC$.
\end{Thm}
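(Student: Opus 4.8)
The plan is to prove the statement near each point of $\Stab(\cD)$ and then globalize, using that a chamber, being a connected open subset of the manifold $\Stab(\cD)$, is path-connected. The walls will be purely \emph{numerical}. For $w \in \Lambda$ not proportional to $v$, let $W_w \subset \Stab(\cD)$ be the locus of $\sigma = (Z,\cP)$ with $Z(v) \neq 0$ and $Z(w) \in \R_{>0}\cdot Z(v)$; equivalently $\Imm\bigl(Z(w)\,\overline{Z(v)}\bigr) = 0$ and $\Real\bigl(Z(w)\,\overline{Z(v)}\bigr) > 0$. Since $w$ and $v$ are $\R$-linearly independent in $\Lambda_\R$, this equation has everywhere nonzero differential on $\{Z(v)\neq 0\}$, so by the local isomorphism $\cZ$ of Theorem~\ref{thm:BridgelandDeformationThm} the set $W_w$ is a real-codimension-one submanifold (possibly empty or disconnected).

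Next I would extract local finiteness from the support property. Fix $\sigma_0 = (Z_0,\cP_0)$ and, after shrinking, a relatively compact neighborhood $U$ on which a single quadratic form $Q$ witnesses the support property and $|Z(v)|$ is bounded above. If $E$ is $\sigma$-semistable of class $v$ for some $\sigma \in U$ and $0 \to A \to E \to B \to 0$ is a short exact sequence in the heart with all three terms of phase $\phi_\sigma(v)$ --- the situation forced along a wall --- then $Z(v(A))$ lies on $\R_{>0}\cdot Z(v)$, so $|Z(v(A))| \leq |Z(v)|$, while $Q(v(A)) \geq 0$ since $A$ is semistable; by the linear-algebra consequence of the support property recalled after Definition~\ref{def:Bridgeland}, $v(A)$ then lies in a finite set as $\sigma$ ranges over $U$. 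Calling $W_w$ \emph{active} when some such $A$ has class $w$, only finitely many active walls meet $U$; I would let $\cW_v$ be the union of all active walls (enlarged, if needed, by the active walls attached to the finitely many classes $v' \in \Lambda$ with $v' \parallel v$ and $|Z_0(v')| \leq |Z_0(v)|$, which does not disturb local finiteness). This presents $\cW_v$ as a locally finite union of real-codimension-one submanifolds.

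It remains to show that $M_\sigma(v)$ is constant on a chamber $\cC$. The heart of the matter is that the full subcategory of $\sigma$-semistable objects of class $v$, together with the resulting $S$-equivalence relation, does not depend on $\sigma \in \cC$: an object $E$ with $v(E) = v$ fails to be $\sigma$-semistable precisely when it has a subobject of strictly larger phase, and as $\sigma$ moves continuously inside $\cC$ such a subobject can only appear or disappear by passing through one of phase exactly $\phi_\sigma(v)$; this makes $E$ strictly semistable with a Jordan--Hölder factor of some class $w$ of phase $\phi_\sigma(v)$, so either $w \parallel v$ --- in which case no active wall is crossed, such factors persist, and neither semistability nor $S$-equivalence changes --- or $\sigma \in W_w \subseteq \cW_v$, contradicting $\sigma \in \cC$; semistability of the finitely many possible $S$-equivalence factors, whose classes are proportional to $v$ and hence $\sigma$-independent, is governed identically. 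By openness of stability~\eqref{eq:openness} this pointwise description propagates in families, so the stack $\cM_\sigma(v)$ is the same for all $\sigma \in \cC$, and hence so is its good moduli space $M_\sigma(v)$ by Theorem~\ref{thm:modulispaces}. Joining two points of $\cC$ by a path and covering it by such constancy neighborhoods finishes the argument.

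I expect the genuine difficulties to lie in the second paragraph --- turning the support property into an honest local finiteness statement, with enough bookkeeping that the relevant destabilizing classes are pinned down --- and in the precise form of the propagation in the third: that ``no active wall is crossed'' forces the moduli \emph{stack}, not just the set of geometric points parameterizing semistable objects, to remain unchanged, which is exactly where openness of stability and the formalism of good moduli spaces do the work, and where strictly semistable objects with sub- or quotient objects of class proportional to $v$ need to be handled with care.
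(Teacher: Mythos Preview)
Your proposal is correct and takes essentially the same approach the paper sketches; the paper itself does not give a self-contained proof but defers to \cite[Section~9]{Bridgeland:K3}, \cite[Proposition~2.8]{Toda:K3}, and \cite[Proposition~3.3]{BM:localP2}, and then describes exactly the picture you flesh out: $\cW_v$ consists of those $\sigma$ admitting a triangle $A\to E\to B$ of semistable objects of the same phase with $v(E)=v$ and $v(A)$ not proportional to $v$, the wall is cut out locally by $Z(A)\parallel Z(E)$, and the support property bounds the finitely many relevant classes.

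One small correction worth noting: in your third paragraph, the openness you actually need is that for a \emph{fixed} object $E$ the set of $\sigma$ where $E$ is semistable is open in $\Stab(\cD)$, which is a consequence of Bridgeland's deformation theorem, not of condition~\eqref{eq:openness} (which concerns families of objects over a base scheme, with $\sigma$ fixed). Once you know the subcategory of semistable objects of class $v$ and the resulting $S$-equivalence relation are literally identical for all $\sigma$ in the chamber, equality of the stacks $\cM_\sigma(v)$ and hence of their good moduli spaces follows directly from the definitions, without further appeal to~\eqref{eq:openness}.
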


Theorem~\ref{thm:wallchambers} follows from the results in~\cite[Section 9]{Bridgeland:K3}; see also \cite[Proposition 2.8]{Toda:K3} and \cite[Proposition 3.3]{BM:localP2}.
The set $\cW_v$ consists of stability conditions for which there exists an exact triangle $A \to E \to B$ of semistable objects of the same phase with $v(E) = v$, but $v(A)$ not proportional to $v$. Locally the wall is defined by $Z(A)$ being proportional to $Z(E)$, and the objects $E$ is unstable on the side where $\mathrm{arg}(Z(A)) > \mathrm{arg}(Z(E))$; often it is stable near the wall on the other side, e.g.~when $A$ and $B$ are \emph{stable} and the extension is non-trivial. The support property~\eqref{eq:SupportProperty} is again crucial in the proof of  Theorem~\ref{thm:wallchambers}: it constrains the classes $a= v(A), b = v(B)$ involved in a wall, and locally that produces a finite set.

Sometimes, one can describe $\cW_v$ completely, namely when we know which of the moduli spaces $M_\sigma(a)$ and $M_\sigma(b)$ are non-empty.

\subsection{K3 categories}\label{subsec:K3}

Such descriptions of $\cW_v$ have been particularly powerful
%This approach has seen its most systematic success 
in the case of K3 categories; it has also been carried out completely for $\Db(\P^2)$, where the answer is  more involved~\cite{CHW:effectiveP2, ChunyiXiaolei:MMPP2}.
For this section, we work over the complex numbers and we let $\cD$ be one of
\begin{enumerate}
    \item\label{enum:K3category1} the derived category $\cD = \Db(S)$ of a smooth projective K3 surface, or
    \item\label{enum:K3cubic} the \emph{Kuznetsov component} 
    \[
    \cD = \Ku(Y) = \cO_Y^\perp \cap \cO_Y(1)^\perp \cap \cO_Y(2)^\perp \subset \Db(Y)
    \]
    of the derived category of a smooth cubic fourfold $Y$, or
    \item\label{enum:K3GM} the Kuznetsov component of a Gushel--Mukai fourfold defined in~\cite{KP:GMcat}.
\end{enumerate}
In~\eqref{enum:K3category1} we can also allow a Brauer twist; one expects further examples of Kuznetsov components of Fano varieties where similar results hold.
%, see~\cite{DV:HK,Kuz:Kuchle} for candidates.
In all these cases, $\cD$ is a Calabi-Yau-2 category: there is a functorial isomorphism $\Hom(E, F) = \Hom(F, E[2])^\vee$ for all $E, F \in \cD$.
Moreover, it has an associated integral weight two Hodge structure $H^*(\cD, \Z)$ with an even pairing $(\blank, \blank)$; in the case of a K3 surface, 
$H^*(\Db(S), \Z) = H^*(S, \Z)$ with $H^0$ and $H^4$ considered to be $(1, 1)$-classes; in the other cases, the underlying lattice is the same, and after the initial indirect construction in  \cite{AT:CubicFourfolds} there is now an intrinsic construction based on the topological K-theory and Hochschild homology of $\cD$ \cite{IHC-CY2}. There is a Mukai vector $v \colon K_0(\cD) \to H^{1, 1}(\cD, \Z)$ satisfying $\chi(E, F) = - (v(E), v(F))$ for all $E, F \in \cD$.

In all three cases, there is a main component $\Stab^\dagger(\cD)\subset\Stab(\cD)$ with an effective version of Theorem~\ref{thm:BridgelandDeformationThm} for $\Lambda = H^{1, 1}(\cD, \Z)$: the map $\cZ$ is a covering of an explicitly described open subset of $\Hom(\Lambda, \C)$, see \cite{Bridgeland:K3} for case~\eqref{enum:K3category1}, \cite{BLMS:inducing, families} for \eqref{enum:K3cubic}, and \cite{PPZ:GM4folds} for  \eqref{enum:K3GM}.

Now consider a family of such K3 categories given by a family of K3 surfaces or Fano fourfolds over a base scheme. In this case, Mukai's classical deformation argument  applies: every stable object $E$ in a given fibre is simple, i.e., it satisfies $\Hom(E, E) = \C$, and so $\Ext^2(E, E) = \C$ by Serre duality; therefore the obvious obstruction to extending $E$ across the family, namely that $v(E)$ remains a Hodge class, is the only one. Extending such deformation arguments to $\cD$ was the original motivation for introducing stability conditions for families of non-commutative varieties, see \cite[Section 31]{families}. This setup allows us to deduce non-emptiness of moduli spaces from the previously known case of K3 surfaces (it also simplifies the previous classical argument for Gieseker-stability on K3 surfaces by reduction to elliptically fibered K3s, see \cite{Bottini:Yoshioka}), which leads to the following result. 

\begin{Thm}[Mukai, Huybrechts, O'Grady, Yoshioka, Toda, \cite{BM:proj, families, PPZ:GM4folds}]\label{thm:Yoshioka}
Let $v \in H^{1, 1}(\cD, \Z)$ be  primitive, and $\sigma \in \Stab^\dagger(\cD)$ be generic. Then $M_\sigma(v)$ is non-empty if and only if $v^2:=(v,v) \ge -2$; in this case, it is a smooth projective irreducible holomorphic symplectic (IHS) variety.
\end{Thm}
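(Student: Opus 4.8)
The plan is to combine the effective version of the Bridgeland Deformation Theorem for $\Stab^\dagger(\cD)$ with a Mukai-style deformation argument in families, reducing everything to the classical case of Gieseker-stable sheaves on a suitably generic K3 surface. I would organize the argument in three stages: (i) establish the result on one fiber (a polarized K3 surface) by classical means; (ii) spread it out to all fibers of a family of K3 categories via deformation; and (iii) deduce the general case by connecting an arbitrary $(\cD,\sigma,v)$ to such a family.

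First I would reduce to the case of an honest K3 surface $S$ with $\cD = \Db(S)$ and $\sigma$ a generic stability condition in $\Stab^\dagger(\Db(S))$ for which $M_\sigma(v)$ coincides, up to derived autoequivalence, with a moduli space of Gieseker-stable sheaves. This uses the description of $\Stab^\dagger$ as a covering of an explicit open subset of $\Hom(\Lambda,\C)$ together with the action of the derived autoequivalence group: any primitive $v$ with $v^2 \ge -2$ can, after applying a suitable autoequivalence, be brought to a Mukai vector realized by sheaves, and genericity of $\sigma$ translates into genericity of the polarization $H$. On the sheaf side, the classical results of Mukai, O'Grady, Huybrechts and Yoshioka then give exactly the dichotomy: $M_H(v) \ne \emptyset$ iff $v^2 \ge -2$, and in that case $M_H(v)$ is a smooth projective IHS variety of dimension $v^2 + 2$, deformation equivalent to a Hilbert scheme of points. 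The ``only if'' direction is the easy half: if $E$ is $\sigma$-stable then it is simple, so $\chi(E,E) = 2 - \ext^1(E,E) \le 2$, and $\chi(E,E) = -(v,v) = -v^2$, giving $v^2 \ge -2$; this argument is intrinsic to any Calabi--Yau-$2$ category and needs no reduction.

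Next I would globalize. Given an arbitrary K3 category $\cD$ (a cubic or Gushel--Mukai fourfold Kuznetsov component, possibly Brauer-twisted K3) with $\sigma \in \Stab^\dagger(\cD)$ and primitive Hodge class $v$ with $v^2 \ge -2$, I invoke the theory of stability conditions in families from~\cite{families}: one constructs a connected family of such categories over a base, with a family of stability conditions in $\Stab^\dagger$ and a flat family of Mukai vectors, connecting $\cD$ to $\Db(S)$ for a generic K3 surface $S$ as in the first step. Here Mukai's deformation argument applies fiberwise: every $\sigma$-stable object $E$ satisfies $\Hom(E,E) = \C$, hence $\Ext^2(E,E) = \C$ by the Calabi--Yau-$2$ property, so the only obstruction to deforming $E$ sideways is that $v$ stay a Hodge class — which holds by construction of the family. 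This shows $M_\sigma(v)$ is non-empty for $\cD$ as soon as it is for the generic K3 fiber, and that the various $M_\sigma(v)$ are deformation equivalent; smoothness, the holomorphic symplectic form, and irreducibility are likewise inherited. Properties like projectivity come from Theorem~\ref{thm:projectivemoduli} once smoothness is known.

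The main obstacle is the family argument underpinning step (iii): one must produce an actual connected family of K3 categories linking the given $\cD$ to an elliptically fibered (or otherwise generic) K3 surface, equipped with a global stability condition in the distinguished component and a flat section of Mukai vectors specializing to $v$, and verify that the relative moduli stack is well-behaved (proper, with a good moduli space, smoothness constant in the family). This is precisely the content developed in~\cite{families}, and it rests on the nontrivial facts that openness of stability and the support property propagate in families and that $\Stab^\dagger$ behaves well under the relevant deformations; granting those inputs, the deformation-invariance of the smooth IHS structure and the non-emptiness criterion follow formally. A secondary subtlety is bookkeeping the $v^2 \ge -2$ threshold through autoequivalences and through specialization, but since $v^2$ is a deformation invariant of the Hodge class this is routine.
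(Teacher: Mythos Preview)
Your proposal is essentially correct and tracks the approach the paper sketches in the paragraph preceding the theorem and in Figure~\ref{fig:Schematic}: classical non-emptiness for Gieseker moduli on K3 surfaces, extension to all of $\Stab^\dagger(\Db(S))$, and then Mukai's deformation argument in families (via~\cite{families,PPZ:GM4folds}) to reach the Kuznetsov components. Your handling of the ``only if'' direction via $\chi(E,E)=-v^2$ and of projectivity via Theorem~\ref{thm:projectivemoduli} is also in line with the paper.

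The one place where your write-up is imprecise is step (i). You propose to use autoequivalences to bring $(\sigma,v)$ to a Gieseker chamber and then invoke Yoshioka; but the autoequivalence group does not act transitively on chambers of $\Stab^\dagger(\Db(S))$ modulo $\widetilde{\GL}^+_2(\R)$, so this reduction does not go through as stated. The mechanism the paper (and the literature) actually uses here is \emph{wall-crossing}: one connects a generic $\sigma$ to the large-volume/Gieseker chamber by a path in $\Stab^\dagger$ and shows, as in~\cite{Toda:K3,BM:proj}, that non-emptiness (and the IHS property) is preserved across each wall. This is precisely the arrow ``Stable sheaves on K3s $\to$ Stable objects on K3s'' in Figure~\ref{fig:Schematic}. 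Once you replace the autoequivalence reduction by this wall-crossing step, your outline matches the paper's.
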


More precisely, $M_\sigma(v)$ is of $\mathrm{K3}^{[n]}$-type, where $n=(v^2+2)/2$, i.e., it is deformation equivalent to the Hilbert scheme of $n$ points on a K3 surface (see~\cite{GrossHuybrechtsJoyce,Debarre:survey} for the basic theory of irreducible holomorphic symplectic varieties).
If $v^2\geq2$, the Mukai morphism
\[
\vartheta\colon H^2(M_\sigma(v),\Z)\to H^*(\cD,\Z)
\]
induced by a (quasi-)universal family gives an identification of $H^2(M_\sigma(v),\Z)$ with $v^\perp$.
If $v^2=0$, then $M_\sigma(v)$ is a K3 surface and $H^2(M_\sigma(v),\Z)$ is identified with $v^\perp/v$.

%For K3 surfaces, this follows by wall-crossing from the analogous results for Gieseker-stable sheaves \cite{Yoshioka:Abelian}. Actually, the same wall-crossing techniques of~\cite{BM:proj,BM:walls} can be used to give a new proof of the whole theorem~\cite{Bottini:Yoshioka}. For the other two cases, it is proved by deformation to the case where $\cD$ is known to be equivalent to the derived category of a K3 surface; this was the original motivation to construct the notion of stability conditions for a family of non-commutative varieties in \cite{families}. The basic argument is due to Mukai: a stable object $E$ is simple, hence $\Ext^2(E, E) = k$ gives a unique obstruction to deforming $E$ in families; this obstruction gives the equation of the locus where $v(E) \in H(\cD, \Z)$ remains a Hodge class.

Knowing exactly which semistable objects exist then allows us to describe exactly when we are on a wall. While a complete result as in \cite[Theorem 5.7]{BM:walls} also needs to treat essential aspects of the wall-crossing behaviour, the basic result is simple to state: 

\begin{Thm}[\cite{BM:walls}]\label{thm:WallsK3}
Let $v \in H^{1, 1}(\cD, \Z)$ be a primitive class. Then $\sigma = (Z, \cP) \in \Stab^\dag(\cD)$ lies on a wall for $v$ if and only if there exist classes $a, b \in H^{1, 1}(\cD, \Z)$ with $v = a+b$, $a^2, b^2 \ge -2$ and $Z(a), Z(b)$ are positive real multiples of $Z(v)$.
\end{Thm}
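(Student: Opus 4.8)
The plan is to deduce this from the general wall description recorded after Theorem~\ref{thm:wallchambers}, combined with the exact characterization of non-empty moduli spaces in Theorem~\ref{thm:Yoshioka}. Recall that a point $\sigma = (Z, \cP)$ lies on a wall for $v$ precisely when there is a semistable object $E$ with $v(E) = v$ that admits a destabilizing exact triangle $A \to E \to B$ with $A, B$ both $\sigma$-semistable of the same phase as $E$ and $v(A)$ not proportional to $v$. Setting $a = v(A)$ and $b = v(B)$, we have $v = a + b$, while the equal-phase condition forces $Z(a)$ and $Z(b)$ to be positive real multiples of $Z(v)$ (each lies on the ray $\R_{>0} \cdot e^{i\pi\phi}$ where $\phi$ is the common phase). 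For the lattice inequalities, the key input is that the factors of the Jordan--Hölder filtration of $A$ and $B$ are $\sigma$-stable of phase $\phi$; on a K3 category a $\sigma$-stable object $F$ satisfies $\Hom(F,F) = \C$ and hence, by the Calabi--Yau-$2$ property, $\chi(F,F) = 2 - 2\ext^1(F,F) \le 2$, i.e.\ $v(F)^2 = -\chi(F,F) \ge -2$. One then needs to promote this from stable to semistable classes $a, b$: writing $a$ as a sum of stable classes of phase $\phi$ and using that the Mukai pairing is even and negative semidefinite on the relevant rank-$2$ sublattice spanned by classes of equal phase (a consequence of the support property, as in the finiteness statement after Definition~\ref{def:Bridgeland}), one checks $a^2 \ge -2$ and symmetrically $b^2 \ge -2$. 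This proves the ``only if'' direction.

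For the ``if'' direction, suppose we are given $a, b \in H^{1,1}(\cD, \Z)$ with $v = a + b$, with $a^2, b^2 \ge -2$, and with $Z(a), Z(b) \in \R_{>0} \cdot Z(v)$. By Theorem~\ref{thm:Yoshioka} applied to $a$ (after replacing $a$ by a primitive class $a_0$ with $a$ a positive multiple of $a_0$, noting $a_0^2 \ge -2$ as well since $a^2 \ge a_0^2$ when... in fact one argues directly with $a$ via its Harder--Narasimhan/Jordan--Hölder decomposition on a nearby generic stability condition), there exist $\sigma$-semistable objects of class $a$, and likewise of class $b$. Since $Z(a)$ and $Z(b)$ are positive real multiples of $Z(v) = Z(a) + Z(b)$, they have the same phase $\phi$. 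Choosing $A$ semistable of class $a$ and $B$ semistable of class $b$ and phase $\phi$, any extension $A \to E \to B[?]$—more precisely a nonzero class in $\Ext^1(B, A)$, which is nonzero by Serre duality and a Riemann--Roch count since $\chi(B,A) = -(a,b)$ and one arranges $(a,b) < 0$ by perturbing, or one uses the trivial extension $E = A \oplus B$ if necessary—produces a semistable object $E$ of class $v$ with a subobject of class $a$ not proportional to $v$ (since $a$ is not proportional to $v$; if it were, then $b$ would be too, and $\sigma$ would not be on a wall, but then $Z(a), Z(b)$ proportional to $Z(v)$ is automatic and carries no information—this degenerate case must be excluded by the hypothesis that $a$ is genuinely a ``wall class'', i.e.\ implicitly $a$ not proportional to $v$). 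Hence $\sigma$ lies on a wall.

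The main obstacle is the careful bookkeeping in the ``only if'' direction when the destabilizing objects $A$, $B$ are merely semistable rather than stable: one must control the Mukai self-intersection of a class that is a sum of several stable classes all of the same phase. The clean way is the linear-algebra lemma that on the saturated rank-$2$ sublattice $\langle a, b\rangle \subset H^{1,1}(\cD,\Z)$ the Mukai form is negative semidefinite (this is exactly the support-property input, since all the stable constituents have the same central-charge ray, so their classes pair non-positively with each other and the form restricted to the span of $Z$-proportional effective classes is negative semidefinite), and then a short case analysis on the discriminant of this rank-$2$ form together with $v^2 \ge -2$ forces $a^2, b^2 \ge -2$. I would isolate this as a lemma; it is where the geometry of K3 categories (evenness of the pairing, Calabi--Yau-$2$) genuinely enters, and it is the only step that is not a formal consequence of the general wall-crossing machinery.
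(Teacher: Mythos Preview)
Your ``if'' direction matches the paper's explanation: Theorem~\ref{thm:Yoshioka} produces semistable objects $A$ and $B$ of classes $a$ and $b$, and then any extension (including $A \oplus B$) is strictly semistable of class $v$, so $\sigma$ lies on a wall. The paper says nothing beyond this, so there is nothing further to compare there. Your worries about primitivity of $a$ and non-vanishing of $\Ext^1(B,A)$ are unnecessary: if $a = k a_0$ with $a_0$ primitive and $a^2 \ge -2$, then $a_0^2 \ge 0$ automatically, so Theorem~\ref{thm:Yoshioka} still applies, and the direct sum already witnesses strict semistability.

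Your ``only if'' direction, however, has a genuine gap. The key lemma you propose is false: the Mukai form on the rank-$2$ sublattice $\cH$ spanned by the classes aligned with $Z(v)$ is \emph{not} negative semidefinite. For $v^2 > 0$ it is hyperbolic of signature $(1,1)$; the support property only tells you that $\Ker Z \cap \cH_\R$ (a one-dimensional subspace) is negative definite, and $v$ itself gives a positive direction. Relatedly, your sign is backwards: for non-isomorphic $\sigma$-stable objects $F_i, F_j$ of the same phase in a CY2 category one has $\Hom(F_i,F_j) = \Ext^2(F_i,F_j) = 0$, hence $(v(F_i), v(F_j)) = \ext^1(F_i,F_j) \ge 0$, not $\le 0$. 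In particular, a sum $a = \sum_i v(F_i)$ of stable classes of the same phase can have $a^2$ arbitrarily negative (take $F_i$ all isomorphic to a spherical object), so your inductive bookkeeping cannot give $a^2 \ge -2$ directly.

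What is actually needed, and what the reference \cite[Theorem~5.7]{BM:walls} does, is to work at a \emph{generic} point of the wall so that all Jordan--H\"older constituents have classes in the rank-$2$ hyperbolic lattice $\cH$, and then carry out a genuine case analysis in $\cH$: one shows that the cone of effective classes in $\cH$ (those represented by semistable objects on the wall) is cut out by the conditions $w^2 \ge -2$ together with the positivity of $Z$, and that $v$ admits a non-trivial decomposition within this cone. This is lattice combinatorics in a hyperbolic rank-$2$ lattice, not a negative-definiteness argument, and it is where the evenness of the Mukai pairing and the bound $v^2 \ge -2$ actually enter.
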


And the fundamental reason is similarly simple to explain: by Theorem \ref{thm:Yoshioka}, this allows for the existence of extensions
\begin{equation}\label{eq:WallsK3}
0 \to A \to E \to B \to 0
\end{equation}
where $v(A) = a, v(E) = v, v(B) = b$ and $A, E, B$ are all semistable of the same phase. For stronger results, we  need to know when such $E$ can become \emph{stable} near the wall.

%%%%%%%%%%

\section{Constructions based on K3 categories}\label{sec:Constructions1}

In this section we present three applications of stability conditions on K3 categories, to irreducible holomorphic symplectic varieties, to  curves, and to cubic fourfolds.

\subsection{Curves in irreducible holomorphic symplectic manifolds}\label{subsec:walls}

Let $M$ be a smooth projective irreducible holomorphic symplectic (IHS) variety of $\mathrm{K3}^{[n]}$-type, with $n\geq2$.
We let $q_M$ be the Beauville--Bogomolov--Fujiki quadratic form on $H^2(M,\Z)$.
By~\cite[Section~3.7.1]{Debarre:survey}, there exists a canonical extension
\[
\vartheta_M \colon (H^2(M,\Z),q_M) \hookrightarrow \widetilde{\Lambda}_M
\]
of lattices and weight-2 Hodge structures, where the lattice $\widetilde{\Lambda}_M$ is isometric to the extended K3 lattice $U^{\oplus 4}\oplus E_8(-1)^{\oplus 2}$.
Let us denote by $v\in\widetilde{\Lambda}_M$ a generator of $\vartheta(H^2(M,\Z))^\perp$: it is of type $(1,1)$ and square $v^2=2n-2$.
The lattice $\widetilde{\Lambda}_M$ is called the \emph{Markman--Mukai} lattice associated to $M$.
If $M=M_\sigma(v)$, for a stability condition $\sigma\in\Stab^\dagger(\Db(S))$ on a K3 surface $S$, then $\widetilde{\Lambda}_M=H^*(S,\Z)$ with the Mukai pairing, the notation for the vector $v$ is coherent, and $\vartheta_M$ is the Mukai morphism mentioned after Theorem~\ref{thm:Yoshioka}.

We let $\Pos(M)$ be the connected component of the positive cone of $M$ containing an ample divisor class:
\[
\Pos(M):=\left\{ D\in H^2(M,\R)\,:\, q_M(D)>0 \right\}^+.
\]

The following result rephrases and proves a conjecture by Hassett--Tschinkel and gives a complete description of the ample cone of $M$.

\begin{Thm}\label{thm:walls}
Let $M$ be a smooth projective IHS variety of $\mathrm{K3}^{[n]}$-type.
The ample cone of $M$ is a connected component of
\[
\mathrm{Pos}(M) \setminus \bigcup_{\begin{subarray}{c}{a \in \widetilde{\Lambda}_{M}^{1,1}\ \mathrm{s.t.}}\\ {a^2\geq -2\ \mathrm{and}} \\ {0\leq (a,v) \leq v^2/2}\end{subarray}} a^\perp.
\]
\end{Thm}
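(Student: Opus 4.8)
The plan is to reduce the statement to the wall-and-chamber description of $\Stab^\dagger$ of a K3 category via Markman's deformation-theoretic machinery, so that Theorem~\ref{thm:WallsK3} can be applied directly. First I would use the fact that $M$ is of $\mathrm{K3}^{[n]}$-type together with Markman's Torelli-type results to reduce to the case $M = M_\sigma(v)$ for some K3 surface $S$ and some generic $\sigma \in \Stab^\dagger(\Db(S))$: the birational class of such a moduli space depends, by Theorem~\ref{thm:Yoshioka} and the surrounding discussion, only on the Hodge-isometry class of $(\widetilde{\Lambda}_M, v)$, and every such pair with $v^2 = 2n-2$ is realised by an actual $(S,\sigma,v)$. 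Since the ample cone is a birational invariant only up to the action of the monodromy group, one has to be a little careful here: the precise claim to set up is that it suffices to describe the \emph{chamber decomposition} of $\Pos(M)$ cut out by the hyperplanes $a^\perp$, and then identify which chamber is the ample one, and both of these are insensitive to the choice of marking once we have fixed the component $\Pos(M)$ containing an ample class.

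Next I would pass through the known identification, due to Bayer--Macrì, between the movable cone (resp. ample cone) of $M_\sigma(v)$ and the image under the Mukai morphism $\vartheta$ of a distinguished chamber in a space of stability conditions. Concretely: $H^2(M_\sigma(v),\Z) \cong v^\perp \subset \widetilde{\Lambda}_M = H^*(S,\Z)$ via $\vartheta$, and the divisor class $\ell_\sigma$ from Theorem~\ref{thm:projectivemoduli}, as $\sigma$ varies over $\Stab^\dagger$, sweeps out (the closure of) a cone inside $\Pos(M)$ whose chamber structure is exactly the image of the wall structure $\cW_v \cap \Stab^\dagger$. By Theorem~\ref{thm:WallsK3}, a point of $\Stab^\dagger$ lies on a wall for $v$ precisely when there is a decomposition $v = a + b$ with $a^2, b^2 \ge -2$ and $Z(a), Z(b) \in \R_{>0} \cdot Z(v)$. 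Translating the proportionality condition $Z(a) \in \R_{>0}Z(v)$ through the map $\ell_\sigma$, and using $\chi(v(A),v(E)) = -(a,v)$ together with the Hodge-index-type positivity coming from the support property, the wall in $\Stab^\dagger$ maps to the hyperplane section $a^\perp \cap \Pos(M)$ in $H^2(M,\R)$; the constraint $b^2 = (v-a)^2 \ge -2$ unfolds, using $v^2 = 2n-2$, to the numerical range $0 \le (a,v) \le v^2/2$ appearing in the statement (the symmetry $a \leftrightarrow b = v-a$ accounts for folding the range in half). Thus the hyperplane arrangement in the theorem is exactly the image of the Bridgeland wall arrangement, and the ample cone is the image of the chamber containing $\sigma$ itself, which is the one meeting the large-volume region — hence a genuine connected component of the complement of the arrangement.

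The main obstacle, as I see it, is the careful bookkeeping that turns the Bridgeland-side numerical conditions of Theorem~\ref{thm:WallsK3} into the BBF-side conditions of the statement, and in particular verifying that \emph{every} hyperplane $a^\perp$ with $a^2 \ge -2$ and $0 \le (a,v) \le v^2/2$ actually occurs as (part of) a wall, not just that every wall has this form. This is exactly where Theorem~\ref{thm:Yoshioka} is essential: given such an $a$ one needs $M_\sigma(a)$ and $M_\sigma(b)$ to be non-empty to produce the destabilising extension~\eqref{eq:WallsK3}, and $a^2 \ge -2$, $b^2 \ge -2$ is precisely the non-emptiness criterion; one also has to rule out spurious cases where the putative wall lies outside $\Pos(M)$ or where the extension does not destabilise a stable object, which is handled by the ``totally semistable wall'' analysis of \cite{BM:walls}. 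A secondary subtlety is that a priori only the \emph{movable} cone, not the ample cone, is visibly swept out by $\ell_\sigma$ over all of $\Stab^\dagger$; pinning down the ample cone as a single chamber requires knowing that crossing a wall in $\Stab^\dagger$ induces either a divisorial contraction or a flop (again from \cite{BM:walls}), so that the chambers of the arrangement correspond to the birational models of $M$ and the identity birational model is cut out by the full arrangement. Once these inputs are in place, the remaining steps — the deformation reduction to a K3 surface, and the lattice-theoretic translation — are essentially formal.
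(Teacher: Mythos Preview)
Your treatment of the moduli-space case is broadly in line with the paper: vary $\sigma$ inside its chamber to sweep out ample classes $\ell_\sigma$, use Theorem~\ref{thm:WallsK3} to identify the boundary walls, and construct extremal rational curves as pencils in $\P(\Ext^1(B,A))$ on which $\ell_\sigma$ has degree zero. The numerical translation you sketch is also essentially the right one, up to the detail that the lower bound $(a,v)\ge 0$ is not a consequence of $b^2\ge -2$ but rather of the requirement that the hyperplane $a^\perp$ actually meets $\Pos(M)$ and that $Z(a)$ lies on the correct ray.

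The genuine gap is your first step. You propose to reduce an arbitrary projective IHS $M$ of $\mathrm{K3}^{[n]}$-type to a moduli space $M_\sigma(v)$ on an honest K3 surface $S$ via Markman's Torelli, claiming that ``every such pair $(\widetilde{\Lambda}_M,v)$ with $v^2=2n-2$ is realised by an actual $(S,\sigma,v)$''. This is not known: it would require the algebraic sublattice $\widetilde{\Lambda}_M^{1,1}$ to contain a hyperbolic plane $U$, which fails in general. Indeed the question of whether every such $M$ is a moduli space on some K3 category is posed as an open problem later in the paper (Question~\ref{ques:HK}). The paper's route around this is quite different from yours: it first proves the theorem for genuine moduli spaces $M_\sigma(v)$ by the wall-crossing argument above, and then, rather than deforming $M$ to a moduli space, it deforms the \emph{extremal rational curves} produced by wall-crossing along the $\mathrm{K3}^{[n]}$-deformation class, using \cite{BHT:nef}. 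The point is that rational curves on IHS varieties deform whenever their class remains of Hodge type, so the numerical description of the extremal rays persists even when $M$ itself has no moduli interpretation.
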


Theorem~\ref{thm:walls} is proved in~\cite{BM:walls} for moduli spaces of stable sheaves/complexes on a K3 surface, and extended in~\cite{BHT:nef} to all IHS of $\mathrm{K3}^{[n]}$-type, by using deformation theory of rational curves on IHS varieties.

The approach to Theorem~\ref{thm:walls} via wall-crossing is as follows. Let $S$ be a K3 surface and $M = M_{\sigma_0}(v)$ be a moduli space  of $\sigma_0$-stable objects in $\Db(S)$, where $v\in H^{1,1}(\Db(S),\Z)$ is a primitive vector of square $v^2\geq2$. As $\sigma$ varies in the chamber $\cC$ containing $\sigma_0$, Theorem~\ref{thm:projectivemoduli} gives a family of ample divisor classes $\ell_\sigma$ in $\Pos(M)$. When $\sigma$ reaches a wall of $\cC$, as given by Theorem~\ref{thm:WallsK3}, the class $\ell_\sigma$ remains nef. On the other hand, consider an object $E$ that becomes strictly semistable on the wall, admitting an exact sequence as in \eqref{eq:WallsK3}. Varying the extension class in a line in $\P(\Ext^1(B, A))$ produces a $\P^1$ of such objects, and Theorem~\ref{thm:projectivemoduli} shows that $\ell_\sigma$ has degree zero on this curve. We have found an extremal curve and, dually, a boundary wall of the ample cone. 

\begin{figure}
%\centering
\begin{center}
\scalebox{0.8}{
\begin{tikzpicture}[
  node distance = 1cm,
  bubble/.style = {draw, ellipse, minimum width = 1.5cm, minimum height = 1.0cm},
  line/.style = {-latex'},
  ]
  % bubbles
  \node[bubble, fill=gray!5!white] (B1) {Ideal sheaves of points};
  \node[bubble, fill=gray!5!white, below = of B1] (B2) {Stable sheaves on elliptic K3s};
  \node[bubble, fill=gray!5!white, below = of B2] (B3) {Stable sheaves on K3s};
  \node[bubble, fill=gray!5!white, below = of B3] (B4) {Stable objects on K3s};
  \node[bubble, fill=gray!5!white, below = of B4] (B5) {Rat'l curves on moduli spaces};
  \node[bubble, fill=gray!5!white, below = of B5] (B6) {Rat'l curves on IHSs of $\mathrm{K3}^{[n]}$-type};
%\draw [use as bounding box] ($(B1)+(-5,1)$) -| ($(B1)+(3,1)$) -| ($(B6)+(4,-0.8)$) -| ($(B1)+(-5,1)$);
  % lines
  \draw [->] (B1) -- (B2) ;
  \draw [->] (B2) -- (B3) ;
  \draw [->] (B3) -- (B4);
  \draw [->] (B4) -- (B5);
  \draw [->] (B5) -- (B6);
  \draw [->] (B1.west) to [out=-150] (B4.west);
    \draw [->] (B1.west) to [out=-150] (B3.west);
  
  \node at ($(B2)+(1.7,1)$) {\text{autoequivalences}};
   \node at ($(B2)+(-0.5,1)$) {\text{\cite{Yoshioka:Abelian}}};
  \node at ($(B3)+(1.2,1)$) {\text{deformation}};
    \node at ($(B3)+(-0.5,1)$) {\text{\cite{Yoshioka:Abelian}}};
%  \node at ($(B3)+(-5,1)$) {\text{wall-crossing }};
    \node at ($(B3)+(-5.8,1)$) [align=left]{wall-crossing, \\ deformation \\ of objects \cite{Bottini:Yoshioka}};
  \node at ($(B4)+(1.25,1)$) {\text{wall-crossing}};
      \node at ($(B4)+(-0.8,1)$) {\text{\cite{Toda:K3, BM:proj}}};
  \node at ($(B5)+(1.25,1)$) {\text{wall-crossing}};
      \node at ($(B5)+(-0.4,1)$) {\text{\cite{BM:walls}}};
  \node at ($(B6)+(2.5,1)$) {deformation of rat'l curves};
      \node at ($(B6)+(-0.5,1)$) {\text{\cite{BHT:nef}}};
     %\node at ($(B6)+(-2.6,1)$) {deformation of rat'l curves};
     %\node at ($(B6)+(0.4,1)$) {\text{\cite{BHT:nef}}};
\end{tikzpicture}
}
\end{center}
\caption{The approach to Theorem~\ref{thm:walls}.} \label{fig:Schematic}
\end{figure}
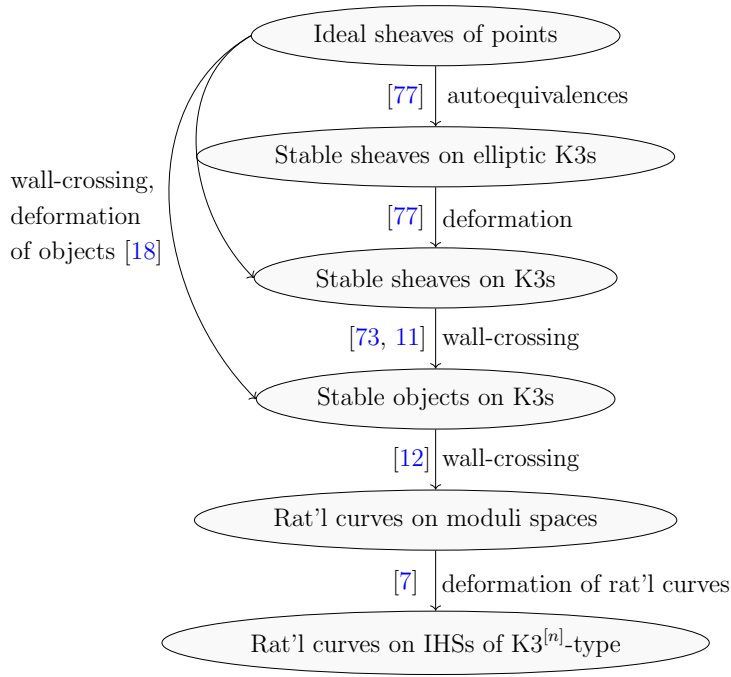

We summarize the history underlying Theorem~\ref{thm:walls} with the  diagram in Figure~\ref{fig:Schematic}. The analogue of Theorem~\ref{thm:Yoshioka} for Gieseker-stable sheaves involves a two-step argument,  using autoequivalences and deformations.
Wall-crossing techniques then imply the existence of Bridgeland stable objects on K3 surfaces, and thus Theorem~\ref{thm:Yoshioka}. As discussed above, a finer wall-crossing analysis based on Theorem~\ref{thm:WallsK3}  then produces the extremal rational curves on moduli spaces that appear implicitly as extremal curves in Theorem~\ref{thm:walls}. 
Finally, another deformation argument, involving rational curves, deduces Theorem~\ref{thm:walls} for all IHS manifolds of $\mathrm{K3}^{[n]}$-type.
Wall-crossing combined with stability conditions in families can also simplify the approach to Theorem~\ref{thm:Yoshioka}, see \cite{Bottini:Yoshioka}.

\subsection{Curves}\label{subsec:Curves}

Consider a \emph{Brill--Noether (BN) wall} in $\Stab(\Db(X))$ for a variety $X$: the structure sheaf $\cO_X$ is stable and of the same phase $\phi$ as objects $E$ of a fixed class $v$. 
Then $\cO_X$ is an object of the abelian category $\cP(\phi)$ with no subobjects; hence the evaluation map $\cO_X \otimes H^0(E) \to E$  must be injective, giving a short exact sequence
\begin{equation} \label{eq:BNses}
0 \to \cO_X \otimes H^0(E) \to E \to Q \to 0 \end{equation}
where $Q \in \cP(\phi)$ is also semistable. Applying known inequalities for Chern classes of semistable objects to 
$\ch(Q) = v - h^0(E) \ch(\cO_X)$ can directly lead to bounds on $h^0(E)$. 

This simple idea turns out to be powerful. For a K3 surface $S$, we can be more precise: applying Theorem \ref{thm:Yoshioka} to the class of $Q$, we can \emph{construct} all $E$ with given $r = h^0$ as a Grassmannian bundle $\mathrm{Gr}(r, \Ext^1(Q, \cO_S))$ of extensions over the moduli space of such $Q$. 

\begin{Cor}\label{cor:BN}
Let $S$ be a K3 surface, $v \in H^{1, 1}(S, \Z)$ primitive and $\sigma$ be a stability condition near the Brill--Noether wall for $v$. If the lattice generated by $v$ and $v(\cO_S)$ is saturated, then the locus of objects $E \in M_\sigma(v)$ with $h^0(E) = r$ has expected dimension.
\end{Cor}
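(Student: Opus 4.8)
\emph{Strategy.} The plan is to use~\eqref{eq:BNses} to present the Brill--Noether locus $M^r:=\{E\in M_\sigma(v):h^0(E)=r\}$ as a Grassmannian bundle over a moduli space governed by Theorem~\ref{thm:Yoshioka}, and then read off its dimension. Write $u:=v(\cO_S)$, so $u^2=-2$, and $\chi:=\chi(\cO_S,E)=-(u,v)$, which depends only on $v$. I would take $\sigma$ generic in the chamber adjacent to the Brill--Noether wall on the side where $\phi_\sigma(\cO_S)<\phi_\sigma(E)$ for objects $E$ of class $v$: on the opposite side~\eqref{eq:Bridgeland3} forces $\Hom(\cO_S,E)=0$ for every $\sigma$-stable $E$ of class $v$, so $M^r=\emptyset$ for $r\ge 1$ and there is nothing to prove. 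For $E\in M_\sigma(v)$ with $h^0(E)=r$, the sequence~\eqref{eq:BNses} becomes $0\to\cO_S^{\oplus r}\to E\to Q\to 0$ with $Q$ semistable of the same phase and $v(Q)=w_r:=v-ru$. Applying $\Hom(\cO_S,-)$ and using $\Ext^1(\cO_S,\cO_S)=H^1(S,\cO_S)=0$ gives $\Hom(\cO_S,Q)=0$; dually $\Ext^2(Q,\cO_S)\cong\Hom(\cO_S,Q)^{\vee}=0$, while $\Hom(Q,\cO_S)=0$ by~\eqref{eq:Bridgeland3} since $\phi_\sigma(\cO_S)<\phi_\sigma(E)<\phi_\sigma(Q)$ near the wall. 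Hence $\ext^1(Q,\cO_S)=-\chi(Q,\cO_S)=2r-\chi$.

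\emph{Reduction to Theorem~\ref{thm:Yoshioka}.} This is where the saturation hypothesis is used: if $w_r=m\,x$ with $m\ge 2$ and $x\in H^{1,1}(S,\Z)$, then $x\in\langle v,u\rangle_{\Q}\cap H^{1,1}(S,\Z)=\langle v,u\rangle$, and writing $x=av+bu$ turns $m\,x=v-ru$ into $ma=1$, a contradiction; thus $w_r$ is primitive. Theorem~\ref{thm:Yoshioka} then applies to $w_r$: the space $M_\sigma(w_r)$ is empty if $w_r^2<-2$ (so $M^r=\emptyset$), and otherwise a smooth projective irreducible variety of dimension $w_r^2+2$. Over the open subset $N:=\{Q\in M_\sigma(w_r):\hom(\cO_S,Q)=0\}$ the Ext-computations above hold uniformly, so the spaces $\Ext^1(Q,\cO_S)$ have constant dimension $2r-\chi$ and form a vector bundle of that rank. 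Sending $E$ to the pair $(Q,W)$ --- with $Q$ the cokernel of the evaluation map and $W\subseteq\Ext^1(Q,\cO_S)$ the image of the class of~\eqref{eq:BNses}, viewed as a map $\Hom(\cO_S,E)^{\vee}\to\Ext^1(Q,\cO_S)$, which is injective since a $\sigma$-stable $E$ has no $\cO_S$-summand, so $\dim W=r$ --- should identify $M^r$ with an open subscheme of the Grassmannian bundle $\mathrm{Gr}(r,2r-\chi)\to N$; conversely $W$ determines the extension $0\to\cO_S\otimes W^{\vee}\to E\to Q\to 0$ up to $\Aut(\cO_S^{\oplus r})=\GL_r$, and the only further condition is $\sigma$-stability of $E$, which is open.

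\emph{Dimension count.} As $N$ is open in the irreducible $M_\sigma(w_r)$ and the Grassmannian bundle has irreducible fibres, $M^r$ is, when nonempty, of pure dimension
\[
\dim N+\dim\mathrm{Gr}(r,2r-\chi)=(w_r^2+2)+r(r-\chi)=v^2+2-r(r-\chi)=\dim M_\sigma(v)-r\bigl(r-\chi(\cO_S,E)\bigr).
\]
Here $r(r-\chi(\cO_S,E))\ge 0$ whenever $M^r\ne\emptyset$, since otherwise $2r-\chi<r$ and the Grassmannian is empty; and it is exactly the expected codimension of the Brill--Noether locus $\{h^0\ge r\}$: on our side of the wall $\Ext^2(\cO_S,E)\cong\Hom(E,\cO_S)^{\vee}=0$ for all $E\in M_\sigma(v)$, so $\RHom(\cO_S,-)$ of the universal object is represented by a two-term complex of vector bundles $[A^0\to A^1]$ whose ranks differ by $\chi$, and $\{h^0\ge r\}$ is the locus where this map drops rank by $r$, of expected codimension $r(r-\chi)$.

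\emph{Main difficulty.} The heart of the argument is the identification of $M^r$ with an open part of the Grassmannian bundle, which rests on two stability assertions valid only for $\sigma$ close enough to the wall and using that $\sigma$ is generic: that the quotient $Q$ of a $\sigma$-stable $E$ is again $\sigma$-stable, so that $E\mapsto Q$ lands in $M_\sigma(w_r)$; and, conversely, that an extension $0\to\cO_S^{\oplus r}\to E\to Q\to 0$ of a $\sigma$-stable $Q$ with $r$-dimensional class is $\sigma$-stable. Both are instances of the near-a-wall analysis underlying Theorems~\ref{thm:wallchambers} and~\ref{thm:WallsK3}: the support property confines any destabilizing class to $\langle u,v\rangle$, and the phase ordering $\phi_\sigma(\cO_S)<\phi_\sigma(E)<\phi_\sigma(Q)$ together with the primitivity of $w_r$ leaves no room for a destabilizing subobject. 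Granting this, the construction and dimension count finish the proof; the remaining points --- that $E\mapsto(Q,W)$ is a morphism, and that $N$ is dense in $M_\sigma(w_r)$ or else $M^r$ is empty --- are routine.
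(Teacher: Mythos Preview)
Your proposal is correct and follows precisely the approach sketched in the paper: the Brill--Noether locus $M^r$ is identified with (an open subset of) the Grassmannian bundle $\mathrm{Gr}\bigl(r,\Ext^1(Q,\cO_S)\bigr)$ over the moduli space $M_\sigma(w_r)$ of the quotients $Q$, whose dimension is controlled by Theorem~\ref{thm:Yoshioka} once saturation guarantees $w_r=v-r\,v(\cO_S)$ is primitive. The paper only states this in one sentence before the corollary; you have supplied the Ext-computation, the dimension count, and correctly flagged that the remaining content is the near-wall stability analysis (that $Q$ is $\sigma$-stable and that generic extensions are $\sigma$-stable), which is exactly the input from~\cite{Arend:BN} underlying the result.
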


In \cite{Arend:BN}, this is applied, in the case where $\Pic(S) = \Z\cdot H$, to rank zero classes of the form $v = (0, H, s)$. In this case, there are no walls between the BN wall and the large volume limit; hence Corollary \ref{cor:BN} applies in the large volume limit, and thus to zero-dimensional torsion sheaves supported on curves in the primitive linear system. This gives a variant of Lazarsfeld's proof \cite{Lazarsfeld:BN-Petri} of the Brill--Noether theorem: \emph{every} curve in the primitive linear system is Brill--Noether general.

This approach has been significantly strengthened in \cite{Soheyla:Mukai}: instead of requiring $E$ to be semistable near the Brill--Noether wall, it is sufficient to control the classes occurring in its HN filtration. A bound on $h^0$ is obtained by applying Corollary~\ref{cor:BN} to all HN filtration factors. Thus we need to consider a point near the Brill--Noether walls for all HN factors, and which is the limit point where $Z(\cO_X) \leadsto 0$. 

\begin{Prop}[{\cite[Proposition 3.4]{Soheyla:Mukai}}]\label{prop:SoheylaMukai} 
Let $S$ be a K3 surface of Picard rank one. There exists a limit point $\overline{\sigma}$ of the space of stability conditions, with central charge $\overline{Z}$, and a constant $C$, such that for (most) objects in the heart, we have
\[
h^0(E) + h^1(E) \leq C \cdot \sum_{l} \left\lvert\overline{Z}(E_l/E_{l-1})\right\rvert,
\]
where $E_0 \subset E_1 \subset \dots \subset E_m$ is the HN filtration of $E$ near $\overline{\sigma}$. 
\end{Prop}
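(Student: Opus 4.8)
The plan is to follow exactly the strategy sketched in the paragraph preceding the statement: reduce the bound on $h^0(E)+h^1(E)$ to a sum of bounds on the HN factors, each of which is handled by a version of the Brill--Noether short exact sequence \eqref{eq:BNses} at the special limit point $\overline{\sigma}$. First I would identify the limit point $\overline{\sigma}$. On a K3 surface of Picard rank one, $\Stab^\dagger(\Db(S))$ is (a cover of) an explicit open domain in $\Hom(\Lambda,\C)\cong\C^3$, and one may approach the boundary along a path where the central charge degenerates so that $\overline{Z}(\cO_S)\to 0$ while the kernel of $\overline{Z}$ stays negative definite with respect to a fixed quadratic form $Q$. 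Concretely, with $v(\cO_S)=(1,0,1)$ one takes $\overline{Z}$ to pair $\cO_S$ into the origin; this is the limit of the Brill--Noether walls for \emph{all} classes simultaneously, since on each such wall $\cO_S$ has the same phase as the relevant class, and as $\overline{Z}(\cO_S)\to0$ all these walls collapse to $\overline{\sigma}$. One needs to check $\overline{\sigma}$ still makes sense as a ``very weak'' stability condition (a central charge with a notion of HN filtration, or a point of a suitable completion of $\Stab^\dagger$), which is the content of ``for (most) objects in the heart'' --- the qualifier excludes objects (like $\cO_S$ itself, or more generally objects whose Mukai vector lies on the null class) for which $\overline{Z}$ vanishes, so some genericity on $E$ or on $v(E)$ is assumed.

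Next I would set up the key estimate for a single HN factor. Let $F=E_l/E_{l-1}$ be $\overline{\sigma}$-semistable (or semistable for $\sigma$ near $\overline{\sigma}$) of class $w=v(F)$. Because $\overline{\sigma}$ is the limit of the Brill--Noether wall for $w$, for $\sigma$ near $\overline{\sigma}$ the object $\cO_S$ has phase just below (or equal to) that of $F$, so $\cO_S$ has no nonzero maps to quotients of $F$ that would destabilise, and the evaluation $\cO_S\otimes H^0(F)\to F$ is injective in the heart $\cP(\phi)$, giving
\begin{equation}\label{eq:BNfactor}
0\to \cO_S\otimes H^0(F)\to F\to Q\to 0
\end{equation}
with $Q\in\cP(\phi)$ semistable and $v(Q)=w-h^0(F)\,v(\cO_S)$. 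Dually, using the Calabi--Yau-2 property $\Hom(A,B)=\Hom(B,A[2])^\vee$, the cokernel-type argument applied to $F$ and $\cO_S$ controls $h^2(F)=\hom(\cO_S,F[2])=\hom(F,\cO_S)^\vee$, hence $h^1(F)=h^0(F)+h^2(F)-\chi(F)$. So it suffices to bound $h^0(F)$ and $h^2(F)$. For $h^0(F)$: the support property \eqref{eq:SupportProperty} forces $Q(v(Q))\ge0$, i.e.\ $v(Q)$ lies in a fixed cone; writing $v(Q)=w-h^0(F)\,v(\cO_S)$ and using that $\overline{Z}(\cO_S)=0$ while $\overline{Z}$ is injective on the relevant subspace, one extracts a linear bound $h^0(F)\le C'\,|\overline{Z}(w)|$ from the quadratic inequality $Q(w-h^0(F)v(\cO_S))\ge0$ --- the point being that the coefficient of $(h^0)^2$ in $Q$ is controlled and the linear term is governed by $|\overline{Z}(w)|$ up to the fixed geometry of the cone. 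The same reasoning applied to the Serre-dual side bounds $h^2(F)$, and $|\chi(F)|\le(\text{const})\cdot|\overline{Z}(w)|$ follows from $\chi(F)=-(v(\cO_S),w)$ and the fact that on the hyperplane $\{\overline{Z}(\cO_S)=0\}$ the pairing with $v(\cO_S)$ is bounded by $|\overline{Z}(\cdot)|$ up to a constant. Combining, $h^0(F)+h^1(F)\le C\cdot|\overline{Z}(w)|$ for a uniform $C$.

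Finally I would assemble the pieces. Long exact sequences in cohomology along the HN filtration $0=E_0\subset E_1\subset\cdots\subset E_m=E$ give $h^i(E)\le\sum_l h^i(E_l/E_{l-1})$ for all $i$ (this uses only that the HN filtration is a genuine filtration in an abelian category and that cohomology is additive on the level of alternating sums, plus the vanishing $\Hom^{<0}$ between HN factors of decreasing phase, which is condition \eqref{eq:Bridgeland3}). Summing the per-factor estimate over $l$ yields
\[
h^0(E)+h^1(E)\ \le\ \sum_l\bigl(h^0(E_l/E_{l-1})+h^1(E_l/E_{l-1})\bigr)\ \le\ C\cdot\sum_l\bigl|\overline{Z}(E_l/E_{l-1})\bigr|,
\]
which is the claim. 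The main obstacle is making the limit point $\overline{\sigma}$ precise and legitimate: one is genuinely at the boundary of $\Stab^\dagger(\Db(S))$, where $\overline{Z}$ is not injective and no honest heart exists for all objects, so one must either work in an explicit boundary stratum of the Bridgeland space (as in the degeneration of stability conditions literature) or set things up so that only the HN filtration \emph{for $\sigma$ near $\overline{\sigma}$} is used, with all numerical estimates taken in the limit --- and one must correctly carve out the ``most objects'' exceptional set (those with $\overline{Z}(v(E))=0$, equivalently whose class is a multiple of $v(\cO_S)$ modulo the relevant lattice) for which the inequality is vacuous or must be stated differently. The per-factor Brill--Noether injectivity \eqref{eq:BNfactor} also requires care that $\cO_S$ remains stable and of the correct phase relative to $F$ along the approach, but this is exactly what ``$\overline{\sigma}$ is the common limit of the Brill--Noether walls'' buys us.
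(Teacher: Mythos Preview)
The paper does not give a proof of this proposition; it is stated with a citation to \cite{Soheyla:Mukai} and preceded only by a one-paragraph sketch of the idea (applying the Brill--Noether short exact sequence to each HN factor at the limit point where $Z(\cO_S)\to 0$, then summing). Your proposal expands exactly that sketch: you correctly identify $\overline{\sigma}$ as the common degeneration of all Brill--Noether walls, use \eqref{eq:BNses} per factor, extract a linear bound from the support-property quadratic form, and sum via the long exact sequence. This is the intended approach, and your identification of the delicate points (legitimacy of the boundary point, the ``most objects'' caveat) is accurate.
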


The following application completes a program originally proposed by Mukai \cite{Mukai:BN}:

\begin{Thm}[\cite{Soheyla:Mukai,Soheyla:Mukai2}]\label{thm:SoheylaMukai}
Let $S$ be a polarised K3 surface with $\Pic S = \Z\cdot H$ and genus $g \ge 11$, and let $C \in \lvert H\rvert$. Then $S$ is the unique K3 surface containing $C$, and can be reconstructed as a Fourier--Mukai partner of a Brill--Noether locus of stable vector bundles on $C$ with prescribed number of sections.
\end{Thm}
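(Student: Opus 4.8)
The plan is to combine the Brill--Noether analysis of Proposition~\ref{prop:SoheylaMukai} with the wall-crossing/moduli-space machinery of K3 categories, using the observation that a suitable Brill--Noether locus of bundles on $C$ is itself a moduli space of Bridgeland-stable objects, which by Theorem~\ref{thm:Yoshioka} recovers a K3 surface (or K3-type variety) canonically attached to $C$. Concretely, fix $C \in |H|$ on a Picard-rank-one K3 surface $S$ of genus $g \ge 11$. For a vector bundle $F$ on $C$ of appropriate rank and degree, pushing forward to $S$ (or, equivalently, viewing the pair $(C, F)$ via the short exact sequence $0 \to F(-H) \to \iota_* F \to \dots$ on $S$) produces a torsion sheaf $\iota_* F$ on $S$ with Mukai vector $w$ supported on $C$; choosing the numerical invariants of $F$ so that the prescribed number of sections is the \emph{generic} value on the relevant stratum, one arranges that the Brill--Noether locus $W$ of such $F$ is, after the identification, an open dense subset of $M_\sigma(w)$ for a generic $\sigma \in \Stab^\dagger(\Db(S))$ lying near the Brill--Noether wall. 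One then wants $w^2 = 0$, so that by Theorem~\ref{thm:Yoshioka} the (compactified) moduli space $M_\sigma(w)$ is itself a K3 surface; this K3 surface is a Fourier--Mukai partner of $S$, and because $\Pic S = \Z \cdot H$ is of rank one, its Fourier--Mukai partners are determined up to finite ambiguity by the derived Torelli theorem---in fact for $g \ge 11$ one expects $S$ to be its \emph{only} Fourier--Mukai partner in the relevant family, so $M_\sigma(w)$ recovers $S$ itself.

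First I would set up the precise numerics: choose the rank $r$ and degree $d$ of the bundles $F$ on $C$, and the target number $k$ of sections, so that (i) the expected dimension of the Brill--Noether locus, computed via the Grassmannian-bundle description implicit in Corollary~\ref{cor:BN}, equals $2$; (ii) the Mukai vector $w = v(\iota_* F)$ satisfies $w^2 = 0$; and (iii) the lattice generated by $w$ and $v(\cO_S)$ is saturated, so that Corollary~\ref{cor:BN} applies and the Brill--Noether stratum has exactly its expected dimension and is \emph{reduced and irreducible}. Second, I would invoke Proposition~\ref{prop:SoheylaMukai}: approaching the limit point $\overline{\sigma}$ where $\overline Z(\cO_S) \leadsto 0$, the uniform bound $h^0(E) + h^1(E) \le C \sum_l |\overline Z(E_l/E_{l-1})|$ controls the sections of every semistable object of class $w$, and hence controls the Brill--Noether behaviour of the bundles $F$ on \emph{all} curves in $|H|$ simultaneously---this is what upgrades a statement about a generic curve to a statement about \emph{every} $C \in |H|$. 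Third, I would verify that the assignment $F \mapsto \iota_* F$ (suitably twisted) identifies the Brill--Noether locus $W_C$ on $C$ with an open substack of $M_\sigma(w)$, and that $M_\sigma(w)$ is a smooth projective K3 surface by Theorem~\ref{thm:Yoshioka}; the Mukai morphism then identifies $H^2(M_\sigma(w), \Z)$ with $w^\perp/w \subset H^*(S,\Z)$, exhibiting $M_\sigma(w)$ as a Fourier--Mukai partner of $S$. Finally, I would run the uniqueness argument: any K3 surface $S'$ containing $C$ in a primitive linear system of genus $g$ produces, by the same construction, the same Brill--Noether surface up to derived equivalence; using the genus bound $g \ge 11$ (which, via Mukai's lattice-theoretic analysis of Fourier--Mukai partners, forces the number of partners to be one) together with the rank-one Picard hypothesis, one concludes $S' \cong S$, and simultaneously that $S$ is reconstructed as the Fourier--Mukai partner $M_\sigma(w)$ of the Brill--Noether locus.

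The main obstacle, I expect, is the passage from the generic curve to \emph{every} curve in $|H|$, i.e.\ controlling the Brill--Noether behaviour uniformly over the whole linear system, including the singular and reducible members. This is precisely where the HN-refined bound of Proposition~\ref{prop:SoheylaMukai} does the essential work, replacing the requirement that $E$ be semistable near the Brill--Noether wall by control of its HN filtration factors; the delicate point is the ``(most)'' qualifier in that proposition---one must check that the finitely many excluded classes (or the objects on the walls between the large-volume limit and $\overline\sigma$) do not produce exceptional curves $C$ with extra sections, which requires a direct analysis of the walls for the class $w$ and of the possible destabilising subobjects, using the saturation hypothesis and the genus bound to rule them out. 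A secondary technical point is the bookkeeping relating the line-bundle twists on $C$ to the Mukai vector on $S$ and the choice of stability condition $\sigma$ so that $\iota_* F$ is genuinely $\sigma$-stable (not merely semistable); this is routine but must be done carefully to ensure $M_\sigma(w)$ is the \emph{smooth} moduli space rather than a singular good moduli space, so that Theorem~\ref{thm:Yoshioka}'s conclusion that it is a K3 surface applies cleanly.
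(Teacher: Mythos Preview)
Your proposal has a genuine gap in the identification of the K3 surface. You take $w = v(\iota_* F)$ for a bundle $F$ on $C$ and want $w^2 = 0$ so that $M_\sigma(w)$ is a K3 surface. But a rank-$r$ bundle $F$ on $C$ has $v(\iota_* F) = (0, rH, \ast)$, hence $w^2 = r^2 H^2 = r^2(2g-2) > 0$; this Mukai vector is \emph{never} isotropic, and $M_\sigma(w)$ has dimension $r^2(2g-2)+2$, not $2$. So the Brill--Noether locus cannot be an open subset of $M_\sigma(w)$ in the way you describe, and the sequence you write as $0 \to F(-H) \to \iota_* F \to \dots$ does not make sense on $S$ (the object $F(-H)$ lives on $C$, not on $S$).

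The paper's mechanism is different in exactly this respect. One chooses a Mukai vector $u$ of \emph{positive rank} on $S$ with $u^2 = 0$, so that $\hat S := M_\sigma(u)$ is a two-dimensional moduli space of stable bundles $E$ on $S$, hence a K3 surface and automatically a Fourier--Mukai partner of $S$. Restriction $E \mapsto E|_C$ maps $\hat S$ into the Brill--Noether locus on $C$: the correct short exact sequence is $0 \to E(-H) \to E \to E|_C \to 0$, and vanishing for $E(-H)$ forces $E|_C$ to have the prescribed $h^0$. The substantive step, and the place where Proposition~\ref{prop:SoheylaMukai} actually enters, is the \emph{converse}: given a stable bundle $V$ on $C$ in the Brill--Noether locus, one pushes forward to obtain $i_* V$, stable at large volume; wall-crossing bounds its HN filtration near $\overline\sigma$, and the resulting inequality for $h^0(V)$ is an equality precisely when that filtration is $E \to i_* V \to E(-H)[1]$ for some $E \in \hat S$, i.e.\ when $V = E|_C$. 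Thus the restriction map $\hat S \to \{\text{BN locus on }C\}$ is a bijection, the BN locus is intrinsic to $C$, and $S$ is recovered as a Fourier--Mukai partner of it. Your outline conflates the roles of the bundle $E$ on $S$ and the bundle $F$ on $C$, and therefore misses this restriction/pushforward dialectic, which is the core of the argument.
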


The structure of the argument is as follows. The numerics are chosen such that there is a two--dimensional moduli space $\hat S$, necessarily a K3 surface, of stable bundles $E$ on $S$ whose restriction $E|_C$ is automatically in the Brill--Noether locus. Conversely, given a stable bundle $V$ on $C$, its push-forward $i_*V$ along $i \colon C \into S$ is stable at the large volume limit. Standard wall-crossing arguments bound its HN filtration near the limit point $\overline{\sigma}$ in 
Proposition~\ref{prop:SoheylaMukai}, which then gives a bound on $h^0(V)$. The argument also shows that equality---the Brill--Noether condition---only holds for the HN filtration $E \to E|_C = i_* V \to E(-H)[1]$, i.e., when $V$ is the restriction of a vector bundle in $\hat S$. Thus $\hat S$ is a Brill--Noether locus on $C$, and $S$ can be reconstructed as a Fourier-Mukai partner of $\hat S$.

\subsection{Surfaces in cubic fourfolds}\label{subsec:Surfaces}

Let $Y\subset\mathbb{P}^5$ be a complex smooth cubic fourfold and let $h$ be the class of a hyperplane section.
Following~\cite{Hassett:specialcubics}, we say that $Y$ is \emph{special} of discriminant $d$, and write $Y\in\mathcal{C}_d$, if there exists a surface $\Sigma\subset Y$ such that $h^2$ and $\Sigma$ span a saturated rank two lattice in $H^4(Y, \Z)$ with
\[
\mathrm{det} \begin{pmatrix}
h^4  & h^2\cdot\Sigma\\
h^2\cdot\Sigma & \Sigma^2
\end{pmatrix} = d.
\]
The locus $\mathcal{C}_d$ is non-empty if and only if $d\equiv 0,2\pmod 6$ and $d>6$; in this case, $\mathcal{C}_d$ is an irreducible divisor in the moduli space of cubic fourfolds. 
%which can also be described purely in terms of Hodge theory and periods (by the Global Torelli Theorem~\cite{Voisin:torelli}; we refer to the book in progress~\cite{Huy:BookCubics} for the general theory of cubic fourfolds).

Given $d$ it is not known in general which degree $h^2\cdot \Sigma$ and self-intersection $\Sigma^2$ can be realised.
The following gives an answer for an infinite series of $d$.

\begin{Thm}[\cite{BBMP}]\label{thm:BBMP}
Let $a\geq1$ be an integer and let $d:=6a^2+6a+2$.
Let $Y$ be a general cubic fourfold in $\mathcal{C}_d$.
Then there exist surfaces $\Sigma\subset Y$ such that
\begin{itemize}
\item $\deg(\Sigma):=h^2\cdot\Sigma=1+\frac{3}{2} a(a+1)$ and $\Sigma^2 = \frac{d+\deg(\Sigma)^2}{3}$;
\item $H^*(Y,\mathscr{I}_\Sigma(a-j))=0$, for all $j=0,1,2$.
\end{itemize}
\end{Thm}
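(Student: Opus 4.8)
The plan is to realise $Y$ and the desired surface $\Sigma$ via wall-crossing in the Kuznetsov component $\Ku(Y)$, which is a K3 category of the type considered in Section~\ref{subsec:K3}. First I would fix the Mukai vector: since $\deg(\Sigma)$, $\Sigma^2$, and $d$ are prescribed by the formulas in the statement, the ideal sheaf $\mathscr{I}_\Sigma$ (or rather its projection/restriction to $\Ku(Y)$) has a definite class $v$ in $H^{1,1}(\Ku(Y),\Z)$, and one checks by direct computation that $v^2 = -2$ (or $v^2 = 0$, depending on normalisation), so that by Theorem~\ref{thm:Yoshioka} the moduli space $M_\sigma(v)$ is a single reduced point (resp.\ a K3 surface) for generic $\sigma \in \Stab^\dagger(\Ku(Y))$. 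The key numeric input $d = 6a^2+6a+2$ is engineered precisely so that this class, together with the classes $v(\OO_Y(j))$ projected to $\Ku(Y)$ for $j = a, a-1, a-2$, sit in a favourable configuration: the vanishing $H^*(Y, \mathscr{I}_\Sigma(a-j)) = 0$ is exactly a Brill--Noether type statement $\Hom^\bullet(\OO_Y(j-a), \mathscr{I}_\Sigma) = 0$, i.e.\ the object representing $\mathscr{I}_\Sigma$ in $\Ku(Y)$ has no morphisms to/from the (shifted) generators near the relevant Brill--Noether wall.

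The argument then proceeds in three steps mirroring the strategy outlined in the introduction. \emph{(Large volume / classical interpretation.)} Near the large-volume limit, stable objects of class $v$ in $\Ku(Y)$ are (twisted) ideal sheaves of surfaces $\Sigma \subset Y$ with the prescribed numerical invariants; this is the standard identification of Gieseker-type moduli with Bridgeland moduli in a boundary chamber, combined with the projection functor $\Db(Y) \to \Ku(Y)$. \emph{(Point of interest / vanishing.)} At the Brill--Noether wall for the generators $\OO_Y(a-j)$, being $\sigma$-semistable forces, via the short exact sequence~\eqref{eq:BNses} applied to each generator in turn (or its analogue in $\Ku(Y)$), the cohomology groups $H^*(Y,\mathscr{I}_\Sigma(a-j))$ to vanish — this is the mechanism of Corollary~\ref{cor:BN} and Proposition~\ref{prop:SoheylaMukai}, transplanted to the cubic fourfold setting. \emph{(Wall-crossing.)} One shows that between the large-volume chamber and the chamber adjacent to the relevant Brill--Noether wall there are no other walls for $v$ — using Theorem~\ref{thm:WallsK3} and the arithmetic of the rank-three lattice spanned by $v$ and the two generator classes, where the condition $d = 6a^2+6a+2$ rules out intermediate destabilising decompositions $v = a' + b'$ with $a'^2, b'^2 \ge -2$. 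Hence the moduli space is constant along this path, and the objects that are Gieseker-stable ideal sheaves at large volume are still semistable at the Brill--Noether wall, forcing the desired vanishing. Non-emptiness of $M_\sigma(v)$ — hence existence of \emph{some} $\Sigma$ — follows from Theorem~\ref{thm:Yoshioka} once $v^2 \ge -2$ is verified, and genericity of $Y \in \mathcal{C}_d$ ensures the Picard/Hodge lattice of $\Ku(Y)$ is exactly the rank-two (or rank-three) lattice we have been computing with, with no unexpected extra classes creating spurious walls.

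The main obstacle I expect is the wall analysis: showing that no wall for $v$ separates the large-volume chamber from the Brill--Noether wall. This requires a careful enumeration of potential sub-objects, controlling their Mukai vectors $a'$ via the support property and the constraints $a'^2 \ge -2$, $0 \le (a',v) \le v^2/2$ from Theorem~\ref{thm:walls}/Theorem~\ref{thm:WallsK3}, and then showing that the specific value $d = 6a^2+6a+2$ — equivalently the specific degree $\deg(\Sigma) = 1 + \tfrac{3}{2}a(a+1)$ — excludes all of them, presumably by a divisibility or quadratic-form argument. A secondary technical point is transporting the Brill--Noether exact sequence from $\Db(Y)$ to $\Ku(Y)$: one must check that the generators $\OO_Y(j)$, while not themselves in $\Ku(Y)$, interact with objects of $\Ku(Y)$ in a way that still yields the cohomology-vanishing conclusion, which should follow from the semiorthogonal decomposition $\Db(Y) = \langle \Ku(Y), \OO_Y, \OO_Y(1), \OO_Y(2)\rangle$ after an appropriate twist so that the relevant line bundles appear among the generators. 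Once these two points are settled, the existence of $\Sigma$ and the vanishing are simultaneous consequences of a single point in $M_\sigma(v)$ lying in the large-volume chamber and on (the closure of) the Brill--Noether wall.
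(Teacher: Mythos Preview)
Your approach has a fundamental gap. Observe first that the vanishing conditions $H^*(Y,\mathscr{I}_\Sigma(a-j))=0$ for $j=0,1,2$ are \emph{exactly} the statement that $\mathscr{I}_\Sigma(a)\in\Ku(Y)=\langle\cO_Y,\cO_Y(1),\cO_Y(2)\rangle^\perp$. So the theorem is asking for a surface $\Sigma\subset Y$ whose twisted ideal sheaf already lies in the Kuznetsov component. Your plan hinges on a ``large volume'' chamber in $\Stab^\dagger(\Ku(Y))$ in which stable objects of the relevant class are twisted ideal sheaves of surfaces on $Y$; but no such chamber exists a priori. The stability conditions on $\Ku(Y)$ are intrinsic to the K3 category---constructed either via the equivalence $\Ku(Y)\cong\Db(S)$ or by the inducing procedure of \cite{BLMS:inducing}---and carry no boundary chamber tied to Gieseker stability on the ambient fourfold $Y$. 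Deciding which stable objects of $\Ku(Y)$ happen to be honest (twisted ideal) sheaves on $Y$, rather than genuine complexes, is precisely the geometric content of the theorem and cannot be extracted from wall-crossing inside $\Ku(Y)$ alone. Correspondingly, your Brill--Noether step collapses: once $E\in\Ku(Y)$, the vanishing $\Hom^*(\cO_Y(j),E)=0$ holds by definition, so there is no BN wall in $\Stab(\Ku(Y))$ associated to the $\cO_Y(j)$ for you to cross; what you flagged as a ``secondary technical point'' is in fact the entire problem.

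The paper's argument is entirely different and geometric. It uses the Lagrangian embedding $Y\hookrightarrow X$ into the Lehn--Lehn--Sorger--van Straten eightfold, realised via \cite{LiPertusiZhao:TwistedCubics} as a moduli space in $\Ku(Y)\cong\Db(S)$. Markman's Torelli theorem gives a birational map $X\dashrightarrow S^{[4]}$, and the wall-crossing input (Theorem~\ref{thm:walls}) is used only to show that for very general $Y\in\cC_d$ the nef and movable cones of $S^{[4]}$ coincide, forcing $X\cong S^{[4]}$. The surface is then constructed concretely as $\Sigma=Y\cap\Gamma_s$, where $\Gamma_s\subset S^{[4]}$ is the sixfold of length-$4$ subschemes containing a fixed general point $s\in S$; varying $s$ gives a family of such $\Sigma$ parameterised by the associated K3 surface itself. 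There is no direct BN-type analysis of $\mathscr{I}_\Sigma$.
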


In fact, we obtain a family of such surfaces $\Sigma$. It is parameterized by (an open subset of) the K3 surface $S$ of degree $d = 6a^2 + 6a + 2$ associated to every $Y \in \cC_d$ by Hassett \cite{Hassett:specialcubics} via the Hodge structures of $Y$ and $S$.

\def\Bl{\mathrm{Bl}}

The proof of Theorem~\ref{thm:BBMP} is based on an enhancement of Hassett's Hodge-theoretic relation to the derived category: by \cite{AT:CubicFourfolds, families}, we have $\Ku(Y) = \Db(S)$. By \cite{LiPertusiZhao:TwistedCubics}, the Lehn-Lehn-Sorger-van Straten IHS eightfold $X$ associated to $Y$ of \cite{LLSvS} can be realised as a moduli space of stable objects in $\Ku(Y)$. It admits a Lagrangian embedding $Y \into X$. Markman's Torelli Theorem \cite{Eyal:survey} implies that $X$ and the Hilbert scheme $S^{[4]}$ are birational; for $a \ge 2$, and $Y \in \cC_d$ very general, we use Theorem~\ref{thm:walls} to show that the nef cone and the movable cone of $S^{[4]}$ are the same, and hence $X$ and $S^{[4]}$ are isomorphic. Now for $s \in S$ general let $\Gamma_s \subset S^{[4]}$ be the locus of subschemes containing $s$; then $\Sigma = Y \cap \Gamma_s$ is the desired surface.

We are interested in this construction because it produces many rational morphisms from $Y$ as follows. The rational map $S^{[4]} \to S^{[5]}, Z \mapsto Z \cup s$ is resolved by the blow-up $\Bl_{\Gamma_s} S^{[4]} \to S^{[5]}$.
Restricting to the cubic, we obtain an embedding $\Bl_\Sigma Y \to S^{[5]}$. We then use wall-crossing for $S^{[5]}$, interpreted as a moduli space in $\Ku(Y) = \Db(S)$, and restrict the resulting birational transformations to $\Bl_\Sigma Y$. In the case $a=2$, this recovers completely the picture described in~\cite{RussoStagliano:TrisecantFlop} and thus the rationality of all cubics in $\mathcal{C}_{38}$. Analogous constructions likely exist for arbitrary $d \equiv 2 \pmod 6$ if we replace $\Gamma_s$ with a locus of sheaves not locally free at $s$ inside a moduli space of stable sheaves.

%%%%%%%%%%

\section{Threefolds}\label{sec:threefolds}

Stability conditions on a threefold $X$ have been constructed in a three step process. Initially we consider slope-stability. In the second step, we reinterpret slope-stability by changing the abelian subcategory of $\Db(X)$ and rotating the central charge; 
the classical Bogomolov-Gieseker inequality then allows us to  deform this to obtain \emph{tilt-stability}, which behaves much like Bridgeland stability conditions on surfaces. Finally, a conjectural generalized Bogomolov-Gieseker type inequality, Conjecture~\ref{conj:generalizedBG}, for tilt-stable objects allows one to repeat this procedure and produce actual stability conditions.

Already the second step, tilt-stability, has geometric consequences when combined with Conjecture~\ref{conj:generalizedBG}. We present three applications: to a bound for the genus of a curve on a threefold~\cite{MS:genus}, to higher rank Donaldson--Thomas theory on Calabi--Yau threefolds~\cite{FT1,FT2}, and to Clifford-type bounds for vector bundles on curves and quintic threefolds~\cite{Li:Quintic3fold}.

\subsection{The generalized Bogomolov--Gieseker inequality}\label{subsec:BG}
We will now describe the first and second step of this construction. For more details, we refer to \cite[Part V]{families}, \cite{PT15:bridgeland_moduli_properties} and \cite{BMS:abelian3folds}.
Throughout this section, we let $X$ be a smooth  projective variety of dimension~$n$, over the complex numbers unless noted otherwise, and let $H$ in $\NS(X)$ be the class of an ample divisor on $X$.

\subsubsection*{The twisted Chern character}
Let
\begin{equation}\label{eq:gamma}
\gamma := e^{-B}\cdot \left(1, 0, -\Gamma,\gamma_3,\dots,\gamma_n\right)\in\bigoplus_{l= 0}^n \CHn{l}(X)_\Q,
\end{equation}
with $B\in\NS(X)_\Q$, $\Gamma\in\CHn{2}(X)_\Q$ such that $H^{n-2}\cdot\Gamma=0$, and $\gamma_3,\dots,\gamma_n$ arbitrary.
We let
\[
\ch^\gamma := \gamma\cdot\ch \colon K_0(X) \to \bigoplus_{l= 0}^n \CHn{l}(X)_\Q
\]
be the \emph{Chern character twisted by $\gamma$}.
If $\gamma=e^{-B}$ (e.g.~in the case of surfaces) then $\ch^\gamma$ is usually denoted by $\ch^B$.
When $X$ is a threefold of Picard rank~1, $\Gamma=0$.

Let $\Lambda_H^\gamma\subset\Q^{n+1}$ be the image of the morphism $v^\gamma_H\colon K_0(\Db(X))\to \Q^{n+1}$ given by
\[
v^\gamma_H(E):=\big(H^n\cdot\ch_0^\gamma(E),H^{n-1}\cdot\ch_1^\gamma(E),\dots,H\cdot\ch_{n-1}^\gamma(E),\ch_n^\gamma(E)\big).
\]
It is a free abelian group of rank~$n+1$.
Given $v\in\Lambda_H^\gamma$, we denote by $v_l$ its $l$-th component.
For $l=0,\dots,n$, we denote by $\Lambda_{H,\leq l}^\gamma$ the sublattice of rank~$l+1$ generated by the first $l+1$ components and by $v_{\leq l}$ the corresponding truncated vector.

\subsubsection*{Slope-stability (Step 1)}
This is analogous to the curve case in Section~\ref{sec:DerivedCategoryStability}. We define the slope of a coherent sheaf $E\in\Coh(X)$ as 
\[ \mu_{H}^{\gamma}(E):=\mu(v^\gamma_H(E)) =v_1(E)/v_0(E), \quad \text{with $\mu_H^\gamma(E) = +\infty$ if $v_0(E) = 0$}.
\]
A sheaf $E$ is \emph{$\mu_{H}^{\gamma}$-semistable} if for every non-zero subsheaf $F\hookrightarrow E$, we have $\mu_{H}^{\gamma}(F)\leq\mu_{H}^{\gamma}(E/F)$.
In particular, torsion sheaves are semistable of slope $+\infty$.
Harder--Narasimhan (HN) filtrations for slope stability exist; if the sheaf has torsion, the first HN factor is the torsion part.
As before, we visualize the slope with  the (weak) stability function $Z\colon\Lambda^\gamma_H\to\C$ given by
\[
Z (v) := - v_1 + i\, v_0.
\]

\subsubsection*{The Bogomolov--Gieseker (BG) inequality}
Let us assume $n\geq2$ and define the  quadratic form $\overline{\Delta}$ on $\Lambda_{H,\leq 2}^\gamma$ of signature $(2,1)$ by
\[
\overline{\Delta} (v) := v_1^2 - 2\,v_0\,v_2.
\]
The following result is a consequence of~\cite{Reid:Bog,Gieseker:Bog,Bogomolov:Ineq} and the Hodge index theorem:

\begin{Thm}[BG inequality]\label{thm:BG}
Let $E\in\Coh(X)$ be a $\mu_{H}^{\gamma}$-semistable sheaf.
Then
\begin{equation}\label{eq:BG}
\overline{\Delta}(v_H^\gamma(E)) \geq0.
\end{equation}
\end{Thm}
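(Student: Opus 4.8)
The plan is to deduce the twisted Bogomolov--Gieseker inequality $\overline{\Delta}(v_H^\gamma(E)) \geq 0$ from the classical (untwisted) one, and then reduce the general case to surfaces via restriction to a complete intersection surface. First I would observe that twisting by $\gamma = e^{-B}\cdot(1,0,-\Gamma,\dots)$ only affects the relevant truncated classes $v_{\leq 2}$ in a controlled way. Writing $\ch^B = e^{-B}\ch$, a direct computation gives $\ch_0^B = \ch_0$, $\ch_1^B = \ch_1 - B\ch_0$, $\ch_2^B = \ch_2 - B\ch_1 + \tfrac{1}{2}B^2\ch_0$; the further twist by $(1,0,-\Gamma,\dots)$ subtracts $\ch_0 \Gamma$ from $\ch_2^B$. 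One checks that $\mu$-semistability is unchanged under the $e^{-B}$-twist (it only shifts all slopes by $H^{n-1}\cdot B$), so $E$ is $\mu_H^\gamma$-semistable iff it is $\mu_H$-semistable in the ordinary sense. Moreover $\overline{\Delta}$ is built to be twist-invariant modulo the $\Gamma$-term: a short computation shows
\[
\overline{\Delta}(v_H^\gamma(E)) = \overline{\Delta}(v_H(E)) + 2\,(H^n\cdot\ch_0(E))\,(H^{n-2}\cdot\Gamma)\cdot(\text{rescaling}),
\]
and since we assumed $H^{n-2}\cdot\Gamma = 0$, the $\Gamma$-contribution vanishes entirely and $\overline{\Delta}(v_H^\gamma(E)) = \overline{\Delta}(v_H(E))$. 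So it suffices to prove the untwisted statement $\overline{\Delta}(v_H(E)) = (H^{n-1}\ch_1)^2 - 2(H^n\ch_0)(H^{n-2}\ch_2) \geq 0$ for $\mu_H$-semistable $E$.

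Next I would reduce to the surface case. For $n = 2$ this is exactly the classical Bogomolov inequality combined with the Hodge index theorem: Bogomolov's theorem gives $\Delta(E) = \ch_1(E)^2 - 2\ch_0(E)\ch_2(E) \geq 0$ as a numerical cycle class, which upon intersecting with the ample class and applying the Hodge index inequality $(H\cdot\ch_1)^2 \geq H^2\cdot\ch_1^2$ yields the stated inequality (with appropriate handling of the torsion and the $\ch_0 = 0$ cases, which are immediate since then $\overline{\Delta} = (H\cdot\ch_1)^2 \geq 0$). For $n \geq 3$ I would invoke the standard restriction theorem for slope-semistable sheaves (Mehta--Ramanathan, or the effective Bogomolov-type restriction results): for a sufficiently positive complete intersection $j\colon S = H_1 \cap \dots \cap H_{n-2} \hookrightarrow X$ with all $H_i \in |mH|$, the restriction $E|_S$ is again $\mu_{H|_S}$-semistable (at least after removing torsion, which one reduces to separately since torsion sheaves contribute $\ch_0 = 0$). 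Then the components $H^{n-1}\ch_1(E)$, $H^n\ch_0(E)$, $H^{n-2}\ch_2(E)$ all match the corresponding intersection numbers on $S$ up to a common power of $m$, so the surface inequality on $S$ gives the inequality on $X$.

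The main obstacle is the restriction step: making sure that slope-semistability is genuinely preserved under restriction to a general complete intersection surface, and dealing cleanly with torsion and with non-locally-free sheaves. The Mehta--Ramanathan theorem guarantees this for a \emph{general} member of $|mH|$ with $m \gg 0$, but one needs it for the fixed polarisation class, which is fine because $\overline{\Delta}$ depends only on intersection numbers with $H$ and these scale uniformly. One subtlety is that $E|_S$ may fail to be torsion-free even if $E$ is, but a general complete intersection avoids the (lower-dimensional) non-locally-free locus, so $E|_S$ is locally free on the generic point and torsion-free; alternatively one passes to the reflexive hull and notes $\ch_{\leq 2}$ is unaffected in codimension $\leq 2$. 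I would structure the writeup as: (i) twist-invariance reducing to untwisted $\overline{\Delta}$; (ii) the $\ch_0 = 0$ / torsion cases handled directly; (iii) Bogomolov plus Hodge index for $n = 2$; (iv) Mehta--Ramanathan restriction reducing $n \geq 3$ to $n = 2$.
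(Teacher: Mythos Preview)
Your overall strategy---classical Bogomolov--Gieseker on surfaces, the Hodge index theorem, and Mehta--Ramanathan restriction for $n\ge 3$---is exactly what the paper intends: the paper states the theorem as ``a consequence of \cite{Reid:Bog,Gieseker:Bog,Bogomolov:Ineq} and the Hodge index theorem'' and gives no further argument, so your sketch is simply an unpacking of that citation.

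There is, however, a genuine slip in your step (i). The quantity $\overline{\Delta}(v_H^\gamma(E))$ is \emph{not} equal to $\overline{\Delta}(v_H(E))$ for general $B$. What is twist-invariant is the cycle-class discriminant $\Delta(E)=\ch_1^2-2\ch_0\ch_2\in\CHn{2}(X)$; your own computation of $\ch^B$ shows $\Delta^B=\Delta$, and then $H^{n-2}\cdot\Delta^\gamma=H^{n-2}\cdot\Delta$ since $H^{n-2}\cdot\Gamma=0$. But
\[
\overline{\Delta}(v_H^\gamma(E)) \;=\; \bigl[(H^{n-1}\ch_1^\gamma)^2 - H^n\cdot H^{n-2}(\ch_1^\gamma)^2\bigr] \;+\; H^n\cdot\bigl(H^{n-2}\Delta(E)\bigr),
\]
and the bracketed Hodge-index term depends on $\ch_1^\gamma=\ch_1-rB$, hence changes under the $B$-twist. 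Concretely the cross-terms $(H^{n-1}\ch_1)(H^{n-1}B)$ versus $H^n\cdot H^{n-2}(B\ch_1)$, and $(H^{n-1}B)^2$ versus $H^n\cdot H^{n-2}B^2$, do not cancel unless $B$ is numerically proportional to $H$.

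The fix is immediate and already implicit in your later steps. Either use the displayed identity directly: the bracket is $\ge 0$ by the Hodge index theorem applied to the divisor class $\ch_1^\gamma$, and $H^{n-2}\Delta(E)\ge 0$ by classical Bogomolov (on a surface, or after restriction). Or carry the twist through the restriction: for a general complete intersection $S$ of class $mH^{n-2}$ one has $\overline{\Delta}_S^{B|_S}(E|_S)=m^2\,\overline{\Delta}(v_H^\gamma(E))$, and the twisted surface inequality is exactly untwisted Bogomolov plus Hodge index on $S$, as in your step (iii). So replace the false equality in (i) by either of these arguments; the remainder of your outline stands.
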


\begin{Rem}\label{rmk:PositiveChar}
The theory works similarly in finite characteristic. Inequalities similar to \eqref{eq:BG} proved by 
Langer in~\cite{Langer:positive} are  not sufficient to construct stability conditions.
Instead, by \cite[Theorem 1.3]{Koseki:BG}, there exists a constant $C_{X, H} \in \R_{\geq 0}$ such that inequality \eqref{eq:BG} holds if we add the term $C_{X,H} \cdot H^2/2$ to $\gamma$. This is sufficient for the construction of tilt-stability below.
\end{Rem}

\subsubsection*{Tilt-stability (Step 2)} Constructing tilt-stability from slope-stability needs two operations, see also fig.~\ref{fig:stepstotiltstability}. First, let $\beta \in \R$. We rotate the central charge $Z$ by setting $Z_{\beta}:=v_0+i(v_1-\beta\, v_0)$, and modify the abelian category accordingly to obtain $\Coh^\gamma_{H,\beta}(X)$ as follows:
\begin{align}
\cT^\beta & := \left\{ E \in \Coh(X) \colon \mu^{\gamma, -}_H(E) > \beta \right\} \nonumber\\
\cF^\beta & := \left\{ E \in \Coh(X) \colon \mu^{\gamma, +}_H(E) \le \beta \right\} \label{eq:defCohbeta} \\
%\Coh^\gamma_{H,\beta}(X) &:=\left\{E\in\Db(X)\,:\, \begin{matrix*}[l] &\bullet\, \cH^l(E)=0,\text{ for all }l\neq0,-1 \\ &\bullet\, \cH^{-1}(E)\in \cF^\beta \hfill \\ &\bullet\, \cH^0(E) \in \cT^\beta \hfill \end{matrix*} \right\},\\
\Coh^\gamma_{H,\beta}(X) &:= \left\{E\in\Db(X)\colon
\cH^l(E)=0,\text{ for  }l\neq0,-1, \cH^{-1}(E)\in \cF^\beta, \cH^0(E) \in \cT^\beta\right\}, \nonumber
\end{align}
where we denoted by $\mu_H^{\gamma,\pm}$ the first and last slope of the HN filtration with respect to $\mu_H^{\gamma}$-stability.
By tilting theory~\cite{Happel-al:tilting}, $\Coh^\gamma_{H,\beta}(X)$, the extension-closure of $\cF^\beta[1]$ and $\cT^\beta$, is the heart of a bounded t-structure on $\Db(X)$; in particular, it is an abelian category.

The pair $(Z_\beta, \Coh^{\gamma}_{H, \beta}(X))$ admits HN filtrations. The difference to slope stability in $\Coh(X)$ is subtle: torsion sheaves supported in codimension $\ge 2$ are in both categories, considered to have slope $+\infty$, and thus now have bigger phase than objects in $\cF^\beta[1]$.

For the second operation, we deform $Z_\beta$ while preserving the category $\Coh^{\gamma}_{H, \beta}(X)$. We follow the presentation in~\cite{Li:Quintic3fold} and denote by $U$ the open subset of $\R^2$ given by
\begin{equation} \label{eq:defU}
U:=\left\{(\alpha,\beta)\in\R^2\colon \alpha>\frac{\beta^2}{2}\right\}.
\end{equation}
For $(\alpha, \beta) \in U$ we consider the slope $\nu^\gamma_{H, \alpha, \beta}$ induced on
$\Coh^\gamma_{H, \beta}(X)$ by the central charge
\begin{equation} \label{eq:deftiltZ}
Z_{\alpha,\beta}(v):= - (v_2-\alpha\,v_0) + i\,(v_1-\beta\,v_0).
\end{equation}
Objects semistable with respect to  $v^\gamma_{H, \alpha, \beta}$ are called \emph{tilt-semistable}.
%and call objects \emph{tilt-semistable} if they are semistable with respect to $v^\gamma_{H, \alpha, \beta}$.
%Theorem~\ref{thm:BG} ensures that  $\nu^\gamma_{H, \alpha, \beta}$ satisfies the see--saw property.
HN filtrations exist and Theorem~\ref{thm:BG} applies equally to tilt-semistable objects, which ensures that tilt-stability has a wall-and-chamber structure analogous to Bridgeland stability as we deform $(\alpha, \beta) \in U$.

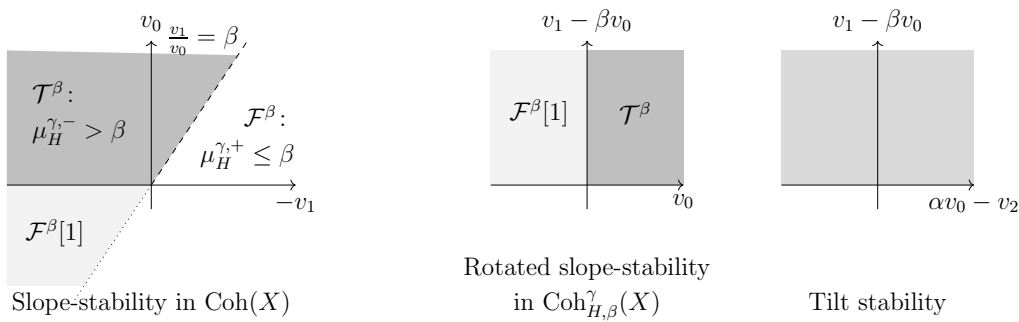
\begin{figure}
%\centering
\begin{center}
\begin{minipage}{0.35\textwidth}
\scalebox{0.8}{
\begin{tikzpicture}[scale = 0.8]
\fill[gray!50] (-3,0)--(0,0)--(1.8,2.7)--(-3,2.8);
\fill[gray!10] (-3,0)--(0,0)--(-1.4,-2.1)--(-3, -2.1);
\node[align=left]  at (-1.5, 1.5) {$\cT^\beta\colon$\\ $\mu_H^{\gamma, -}> \beta$};
\node[align=right]  at (2, 1) {$\cF^\beta\colon$ \\ $\mu_H^{\gamma, +} \le \beta$};
\node[align=right]  at (-2, -1) {$\cF^\beta[1]$};
\draw[->] (-3,0)--(3,0) node[below]{$-v_1$};
\draw[->] (0,-0.5)--(0,3) node[above]{$v_0$\vphantom{$\beta$}};
\draw[dashed] (0,0)--(2, 3)  node[left]{$\frac{v_1}{v_0} = \beta$};
\draw[dotted] (0,0)--(-1.6, -2.4);
\node[align = center, anchor=south] at (0, -3) {Slope-stability in $\Coh(X)$\vphantom{$\Coh^\gamma_{H, \beta}(X)$}};
\end{tikzpicture}
}
\end{minipage}
\begin{minipage}{0.25\textwidth}
\scalebox{0.8}{
\begin{tikzpicture}[scale = 0.8]
\fill[gray!50] (0,2.8)--(0,0)--(2,0)--(2, 2.8);
\fill[gray!10] (0,2.8)--(0,0)--(-2,0)--(-2, 2.8);
\node  at (-1, 1.5) {$\cF^\beta[1]$};
\node  at (1, 1.5) {$\cT^\beta$};
\draw[->] (-2,0)--(2,0) node[below]{$v_0$};
\draw[->] (0,-0.5)--(0,3) node[above]{$v_1-\beta v_0$};
\node[align = center, anchor=south] at (0, -3) {Rotated slope-stability\\ in $\Coh^\gamma_{H, \beta}(X)$};
\end{tikzpicture}
}
\vfill
\end{minipage}
\begin{minipage}{0.25\textwidth}
\scalebox{0.8}{
\begin{tikzpicture}[scale = 0.8]
\fill[gray!30] (-2, 0)--(2, 0)--(2, 2.8)--(-2, 2.8);
\draw[->] (-2,0)--(2,0) node[below]{$\alpha v_0 - v_2$};
\draw[->] (0,-0.5)--(0,3) node[above]{$v_1-\beta v_0$};
\node[align = center, anchor = south] at (0, -3) {Tilt stability\vphantom{$\Coh^\gamma_{H, \beta}(X)$}};
\end{tikzpicture}
}
\end{minipage}
\end{center}

\caption{Rotating and deforming slope-stability to obtain tilt stability}
\label{fig:stepstotiltstability}
\end{figure}

\begin{Rem}\label{rmk:LePotier}
We can be more precise and sometimes obtain a larger set of tilt-stability conditions.
Following~\cite{FLZ:connected}, we define the \emph{Le Potier function} $\Phi_{X,H}^\gamma\colon\R \to \R$ as
\[
\Phi_{X,H}^\gamma(x):= \limsup_{\mu\to x} \left\{\frac{v_H^\gamma(E)_2}{v_H^\gamma(E)_0}\colon E\in\Coh(X) \text{ is }\mu_H^\gamma\text{-semistable with }\mu_H^\gamma(E)=\mu \right\}.
\]
It is upper semi-continuous, and by Theorem~\ref{thm:BG} it satisfies $\Phi_{X,H}^\gamma(x)\leq x^2/2$.
Tilt-stability is well-defined for all $(\alpha,\beta)\in\R^2$ such that $\alpha>\Phi_{X,H}^\gamma(\beta)$.
\end{Rem}

\begin{Ex}\label{ex:surface}
If $X$ is a surface, then $(Z_{\alpha,\beta},\Coh^\gamma_{H,\beta}(X))$ induces a Bridgeland stability condition on $\Db(X)$, as constructed in \cite{Aaron-Daniele}.
The support property  is given by the quadratic form~\eqref{eq:BG}.
If $X$ has finite Albanese morphism, then  all stability conditions on $\Db(X)$ with respect to $(v_H^\gamma,\Lambda^\gamma_H)$ are given by those constructed in Remark \ref{rmk:LePotier}, up to linear action~\cite{FLZ:connected}.
A stronger version of Theorem~\ref{thm:BG}, involving $\ch_1(E)^2$ rather than just $(H.\ch_1(E))^2$, gives a support  property with respect to the full numerical Grothendieck group of $\Db(X)$ and allows us to vary $H$; see~\cite[Theorem 3.5]{BMS:abelian3folds}.
In the case of a K3 surface $S$, the closure of the locus of such stability conditions and its translates under autoequivalences give the connected component $\Stab^\dagger(\Db(S))$ mentioned in Section~\ref{subsec:K3}.
\end{Ex}

\subsubsection*{The generalized Bogomolov--Gieseker inequality}
Let us assume $n\geq3$.
The main open question is to find an inequality involving $v_3$ for tilt-semistable objects, generalizing~\eqref{eq:BG}.
For $(\alpha,\beta)\in U$, let us define the quadratic form $\overline{Q}_{\alpha,\beta}$ on $\Lambda_{H,\leq3}^\gamma$ of signature $(2,2)$ by:
\begin{equation*}
\overline{Q}_{\alpha,\beta}(v) := \alpha\, \big( v_1^2 -2\, v_0\, v_2 \big) + \beta\, \big(3v_0\, v_3 -v_1\, v_2\big) + \big(2\, v_2^2 - 3\, v_1\, v_3 \big).
\end{equation*}

\begin{Def}\label{def:generalizedBG}
We say that $(X,H)$ satisfies the \emph{$\gamma$-generalized Bogomolov--Gieseker (BG) inequality} at $(\alpha,\beta)\in U$, if for all $E\in\Coh^\gamma_{H,\beta}(X)$ which are $\nu^\gamma_{H,\alpha,\beta}$-semistable, we have
\begin{equation}\label{eq:generalizedBG}
    \overline{Q}_{\alpha,\beta}(v_H^\gamma(E))\geq0.
\end{equation}
\end{Def}

%If a threefold $(X, H)$ satisfies the generalized BG inequality, one can construct Bridgeland stability conditions on $\Db(X)$: the procedure, starting with $(Z_{\alpha, \beta}, \Coh^\gamma_{H, \beta}(X))$ is completely analogous to the construction of tilt-stability starting from $(Z, \Coh(X))$ above.  

\begin{Thm}[\cite{BMS:abelian3folds, PT15:bridgeland_moduli_properties}]
A polarized threefold satisfying the $\gamma$-generalized BG inequality at a point $(\alpha,\beta)\in U$ admits Bridgeland stability conditions. 
\end{Thm}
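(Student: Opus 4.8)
The plan is to mimic the construction of tilt-stability one step higher, using the $\gamma$-generalized BG inequality as the analogue of the classical BG inequality that was used in Step~2. Concretely, starting from tilt-stability $(Z_{\alpha,\beta}, \Coh^\gamma_{H,\beta}(X))$ at the distinguished point $(\alpha,\beta)\in U$, I would first perform a tilt of the heart: split $\Coh^\gamma_{H,\beta}(X)$ into the torsion pair $(\cT', \cF')$ given by objects of positive (resp.\ non-positive) $\nu^\gamma_{H,\alpha,\beta}$-slope, and let $\cA'$ be the extension closure of $\cF'[1]$ and $\cT'$; by the general tilting theorem of~\cite{Happel-al:tilting} this is again the heart of a bounded t-structure on $\Db(X)$. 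Then I would rotate the central charge: for a small parameter $s>0$ set
\[
Z_{\alpha,\beta,s}(v) := -v_3 + s\,v_1 + i\,\bigl(v_2 - \alpha v_0\bigr),
\]
possibly after an $\R_{>0}$-rescaling and a further real shear in the $v_1$ and $v_0$ directions to land inside $U$-type constraints; the point is that on objects of $\cA'$ the imaginary part is non-negative, vanishing exactly on the tilt-semistable objects of slope $\pm\infty$, which is the usual mechanism that makes $(Z_{\alpha,\beta,s}, \cA')$ a weak stability function with HN filtrations.

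Next I would verify the axioms of Definition~\ref{def:Bridgeland} for $\sigma = (Z_{\alpha,\beta,s}, \cP)$ with $\cP$ the slicing generated by $\cA'$ and $\nu'$-semistability. Conditions~\eqref{eq:Bridgeland1}--\eqref{eq:HN} are formal consequences of the tilting construction together with existence of HN filtrations for the weak stability $(Z_{\alpha,\beta,s},\cA')$, which follows as in the surface case once one checks the Noetherianity-type finiteness (here one uses Theorem~\ref{thm:BG} for tilt-semistable objects, already available). The key point is the support property~\eqref{eq:SupportProperty}: I would take $Q$ to be (a positive linear combination of) the quadratic form $\overline{Q}_{\alpha,\beta}$ from Definition~\ref{def:generalizedBG} together with $\overline{\Delta}$, pulled back to $\Lambda^\gamma_H$. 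The generalized BG inequality~\eqref{eq:generalizedBG} gives $Q\geq 0$ on all $\nu^\gamma_{H,\alpha,\beta}$-semistable objects, hence on a spanning set of the new heart $\cA'$, and a short linear-algebra argument (as in~\cite{BMS:abelian3folds}) upgrades this to $Q\geq 0$ on all $\sigma$-semistable objects and to negative-definiteness of $Q$ on $\ker Z_{\alpha,\beta,s}$ once $s$ is chosen small and generic; the signature $(2,2)$ of $\overline{Q}_{\alpha,\beta}$ is exactly what makes this possible after restricting to the relevant hyperplanes. Finally, openness~\eqref{eq:openness} and boundedness~\eqref{eq:boundedness} are inherited from the corresponding statements for tilt-stability, proved in~\cite{PT15:bridgeland_moduli_properties}; here one uses that the new heart is obtained from $\Coh^\gamma_{H,\beta}(X)$ by a tilt at a stability function and that such tilts preserve these properties.

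The main obstacle is the support property, and within it the delicate interplay between the quadratic forms and the choice of the rotation parameter $s$: one must check that $\overline{Q}_{\alpha,\beta}$ (possibly combined with $\overline{\Delta}$) is simultaneously non-negative on semistable classes and negative definite on $\ker Z_{\alpha,\beta,s}$, and that this persists as $(\alpha,\beta)$ and $s$ vary in an open set so that Theorem~\ref{thm:BridgelandDeformationThm} produces an honest open region of stability conditions rather than a single point. This is the technical heart of~\cite{BMS:abelian3folds}; the bookkeeping with the three lattices $\Lambda^\gamma_{H,\leq l}$ and the projections between them, together with the precise coefficients in $\overline{Q}_{\alpha,\beta}$, is where all the work goes. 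Everything else—tilting, HN filtrations, openness, boundedness—is a routine transfer of the surface-case arguments combined with results already cited in the excerpt.
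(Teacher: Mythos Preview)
Your proposal is correct and matches the paper's approach exactly: the paper's entire proof is the single sentence ``The construction of these stability conditions from tilt stability is completely analogous to the construction of tilt stability from slope-stability discussed above,'' and you have spelled out precisely that analogy---tilt the heart $\Coh^\gamma_{H,\beta}(X)$ at the $\nu^\gamma_{H,\alpha,\beta}$-slope, rotate and deform the central charge to pick up $v_3$, and use $\overline{Q}_{\alpha,\beta}$ for the support property with openness/boundedness supplied by~\cite{PT15:bridgeland_moduli_properties}. Your identification of the support-property verification as the technical heart, and its attribution to~\cite{BMS:abelian3folds}, is also on the mark.
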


The construction of these stability conditions from tilt stability is completely analogous to the construction of tilt stability from slope-stability discussed above.
It was conjectured in~\cite[Conjecture~1.3.1]{BMT:3folds-BG} that all polarized threefolds satisfy the generalized BG inequality for all $(\alpha,\beta)\in U$ and $\gamma=e^{-B}$; this turned out to be too optimistic, see~\cite{Schmidt:counterexample}.
The following is a modification of the original conjecture, based on~\cite{Dulip:Fano,Li:Quintic3fold}:

\begin{Con}\label{conj:generalizedBG}
Let $(X,H)$ be a smooth complex projective polarized variety.
There exists a class $\gamma=\gamma_{X,H}$ as in~\eqref{eq:gamma} and an upper semi-continuous %1-periodic even
function $f=f_{X,H}^\gamma\colon\R\to\R$, such that $(X,H)$ satisfies the $\gamma$-generalized BG inequality for all $(\alpha,\beta)\in\R^2$ with $\alpha>f(\beta)$.
\end{Con}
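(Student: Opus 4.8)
This is open; the following outlines a possible approach, abstracting the strategy behind the cases known so far (e.g.~\cite{BMT:3folds-BG, BMS:abelian3folds, Li:Quintic3fold}), and indicates where the main obstacle lies.

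The plan is first to carry out the standard reductions. For $E\in\Coh^\gamma_{H,\beta}(X)$ with $v_0(E)=0$ the object is, after the tilt, a torsion sheaf; taking its support reduces \eqref{eq:generalizedBG} to the Bogomolov--Gieseker inequality of Theorem~\ref{thm:BG} on a smooth surface section together with the classical inequality on that support, so one may assume $v_0(E)>0$. For $v=v_H^\gamma(E)$ fixed, the map $(\alpha,\beta)\mapsto\overline{Q}_{\alpha,\beta}(v)$ is affine-linear in $(\alpha,\beta)$, with $\alpha$-coefficient $\overline{\Delta}(v)$; hence on the closure of a chamber of tilt-stability its minimum is attained on the boundary, and since the walls are cut out by classes with $\overline{\Delta}\geq0$ (again Theorem~\ref{thm:BG}), one reduces to verifying \eqref{eq:generalizedBG} for objects that remain tilt-stable as $\alpha\to\Phi_{X,H}^\gamma(\beta)^+$ along a fixed rational $\beta$ --- that is, for shifts of $\mu_H^\gamma$-stable sheaves of slope $\beta$. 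So the real content is a Bogomolov--Gieseker-type bound on $\ch_3^\gamma$ for slope-stable sheaves in a ``critical window'', not a statement about arbitrary tilt-semistable objects. If, moreover, one expects the cases of dimension $n\geq4$ to follow from $n=3$ by restriction to general hyperplane sections (using restriction theorems of Mehta--Ramanathan/Flenner type, after adjusting $\gamma$), then threefolds are the essential case.

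Second, the flexibility in the choice of $\gamma$ --- in particular the term $\Gamma\in\CHn{2}(X)_\Q$ and the higher terms $\gamma_3,\dots$ --- is what makes the \emph{modified} conjecture plausible where the original one with $\gamma=e^{-B}$ is false: Schmidt's counterexample \cite{Schmidt:counterexample} on the blow-up of $\P^3$ is destabilised by objects supported on a curve, and correcting $\gamma_3$ by a multiple of that class restores the inequality. I would formalise this as a finiteness and fixed-point step: starting from $\gamma=e^{-B}$, the support property~\eqref{eq:SupportProperty} and boundedness~\eqref{eq:boundedness} of tilt-semistable objects confine the classes on which $\overline{Q}_{\alpha,\beta}$ can be negative to a controlled family, and one then translates $\ch_3^\gamma$ by choosing $\gamma_3$ (and, if necessary, $\Gamma$ and $B$) just enough to make $\overline{Q}_{\alpha,\beta}\geq0$ on all of them without creating new violations, which is possible because the correction is of lower order in the truncation $\Lambda^\gamma_{H,\leq3}$.

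For the remaining case --- slope-stable sheaves in the critical window --- I see two viable routes. First, \emph{induction on dimension by restriction}: when $X$ is a complete intersection in a variety $P$ for which the analogous inequality is already known (projective space, toric varieties, abelian varieties), restrict a tilt-semistable object to a general section and transport the inequality back as above; this is essentially Li's argument for the quintic \cite{Li:Quintic3fold} and should cover a large class of complete intersections. Second, a \emph{wall-crossing bootstrap}: compare tilt-stability on $X$ with Bridgeland stability on its surface sections, which exists by Example~\ref{ex:surface}, and bound $\ch_3(E)$ by locating where a Jordan--H\"older factor of $E$ first destabilises; this yields \eqref{eq:generalizedBG} for $E$ of bounded $\overline{\Delta}$, after which one inducts on $\overline{\Delta}$. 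The hard part --- and the reason the conjecture remains open --- is that neither route is uniform: the first needs a good ambient variety, the second a seed inequality to bootstrap from. A genuinely general proof seems to require either a new ``derived Hodge-index'' positivity for $\ch_3$ of slope-stable sheaves valid on every threefold, or a deformation argument showing that the locus of $(X,H)$ satisfying the conjecture is both dense and closed in each deformation family. The deformation route is itself obstructed by the openness-of-stability phenomenon underlying Theorem~\ref{thm:BridgelandDeformationThm}: a tilt-semistable object in a special fibre need not be a limit of tilt-semistable objects in nearby fibres, so it is unclear that the ``good locus'' is closed --- and closing exactly that gap would be the main obstacle.
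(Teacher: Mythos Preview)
This is Conjecture~\ref{conj:generalizedBG}, not a theorem: the paper offers no proof, only the list of cases where it has been verified and, in Section~\ref{sec:Further}, the open question of whether induction on dimension might work. You correctly flag it as open, so there is no argument in the paper to compare against; the comments below concern your outline on its own terms.

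Two steps do not hold up. First, the reduction ``objects that remain tilt-stable as $\alpha\to\Phi_{X,H}^\gamma(\beta)^+$ \dots\ that is, shifts of $\mu_H^\gamma$-stable sheaves of slope $\beta$'' misidentifies the limit: slope-stable sheaves (and their shifts) are what one recovers at the \emph{large} volume limit $\alpha\to+\infty$, whereas the boundary $\alpha\searrow\beta^2/2$ is the \emph{small} volume limit, and the $\overline{\beta}$-stable objects there (see Section~\ref{subsec:LVLvsBN}) are genuine two-term complexes in general. The reduction actually carried out in \cite{BMT:3folds-BG,BMS:abelian3folds} is to tilt-semistable objects of tilt-slope zero, and the further translation into a statement about slope-stable sheaves is a separate (and non-trivial) equivalence, not the assertion that the limiting tilt-stable objects \emph{are} such sheaves. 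Second, your ``finiteness and fixed-point step'' for choosing $\gamma$ appeals to the support property and boundedness to confine the classes with $\overline{Q}_{\alpha,\beta}<0$, but the only support property available for tilt-stability is $\overline{\Delta}\geq 0$, which involves $v_{\leq 2}$ alone and says nothing about $v_3$; and boundedness is per class, not across classes. So neither input bounds the set of violating classes, and shifting $\gamma_3$ moves $\ch_3^\gamma$ uniformly, so there is no evident mechanism preventing new violations from appearing as old ones are corrected. In the one case where a non-trivial $\gamma$ is known to work, \cite{Dulip:Fano}, the correction $\Gamma$ is a specific geometric choice tied to $c_2(X)$, not the output of an abstract iteration; turning that into a general procedure is exactly the open problem.
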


Conjecture~\ref{conj:generalizedBG} has been established in a number of three-dimensional cases:
\begin{itemize}
    \item prime Fano threefolds~\cite{Macri:P3,Schmidt:Quadric,Li:FanoPic1}, with $\gamma=1$ and $f_{X,H}^\gamma(x)=x^2/2$;
    \item abelian threefolds~\cite{MacPiy:ab3folds,BMS:abelian3folds}, with $\gamma=e^{-B}$, for all $B\in\NS(X)_\R$, and $f_{X,H}^\gamma(x)=x^2/2$;
    \item the quintic threefold~\cite{Li:Quintic3fold}, with $\gamma=1$ and $f_{X,H}^\gamma(x)=x^2/2+(x-\lfloor x \rfloor)(\lfloor x \rfloor +1 - x)/2$;
    \item the complete intersection of quadratic
and quartic hypersurfaces in $\P^5$~\cite{ShengxuanLiu:CY}, with $\gamma=1$ and $f_{X,H}^\gamma(x)=x^2/2+(x-\lfloor x \rfloor)(\lfloor x \rfloor +1 - x)/2$;
    \item the blow-up of $\P^3$ at a point~\cite{Dulip:Fano}, with $H=-K_X/2$, $\gamma=(1,0,-\Gamma,0)$,
    \[
    \Gamma=\frac{1}{12}\left(c_2(X)-\frac{H\cdot c_2(X)}{H^3}H^2\right),
    \]
    and $f_{X,H}^\gamma(x)=x^2/2+(x-\lfloor x \rfloor +1)^2$; and
    \item threefolds with nef tangent bundle~\cite{BMSZ:Fano,koseki:2}, with $H$ any ample divisor, $\gamma=e^{-B}$, for all $B\in\NS(X)_\R$ (except in the case $X=\P(T_{\P^2})$, where $H=-K_X/2$ and $\gamma=1$), and $f_{X,H}^\gamma(x)=x^2/2$.
\end{itemize}

Similar versions have been proved for all Fano threefolds~\cite{BMSZ:Fano,Dulip:Fano} and for Calabi--Yau double and triple solids~\cite{koseki:CY}.
It is also known in some cases in arbitrary characteristic, e.g.~$\P^3$ (where $\gamma=1$, $f_{X,H}^\gamma(x)=x^2/2$).
There is no known counterexample to Conjecture~\ref{conj:generalizedBG} with $f_{X,H}^\gamma(x)=x^2/2$; a non-trivial choice of $\gamma$ is instead necessary: the blow-up of $\P^3$ at a point with the anti-canonical polarization does not satisfy the $\gamma$-generalized BG inequality if we take $\gamma=1$ and $f_{X,H}^\gamma(x)=x^2/2$~\cite{Schmidt:counterexample}.

\subsection{Tilt-stability methods}\label{subsec:LVLvsBN}

We  now describe three limit points of tilt-stability in the set $U$ defined in \eqref{eq:defU}, the \emph{small and large volume limit} and the \emph{Brill--Noether} point.
The latter two are  generalizations of limits discussed in Section~\ref{subsec:Curves} for Bridgeland stability conditions in the K3 surface case.

Throughout this section we fix a class  $v\in\Lambda_H^\gamma$, and assume $v_0 \neq 0$ for simplicity.

\subsubsection*{Walls}  We want to understand walls in $U$ for tilt-stability $(Z_{\alpha, \beta}, \Coh_{H,\beta}(X))$, defined by \eqref{eq:deftiltZ} and \eqref{eq:defCohbeta}, of objects of class $v$. We may assume $\bar\Delta(v) \geq 0$ and define
\[
p(v):=\left(\frac{v_2}{v_0},\frac{v_1}{v_0} \right) \in \R^2 \setminus U.
\]
Given a line $L$ containing $p(v)$, we define the \emph{potential wall} in $U$ for $v$ associated to $L$ by
\[
\cW_L:=\left\{(\alpha,\beta)\in U\cap L\right\}.
\]
%Notice that if $v_0=0$, we are looking at all parallel lines with slope $v_1/v_2$.

The main property of walls are the following, see also Figure~\ref{fig:U}. Two objects of $\Coh^\gamma_{H, \beta}(X)$ with classes $v$ and $w$ have the same slope with respect to $\nu^\gamma_{\alpha, \beta, H}$ if and only if $(\alpha, \beta)$ lies on the line passing through $p(v)$ and $p(w)$.
If an object of class $v$ is tilt-(semi)stable at one point of 
$\cW_L$, then it is tilt-(semi)stable for all points in $\cW_L$. Moreover,
actual walls for $v$ are a locally finite set of potential walls, and 
tilt-stability %(and more generally, Jordan--H\"older filtrations)
is unchanged except when crossing one of these walls. %In particular, tilt-stability is open.

The following elementary observation is often useful for induction arguments:

\begin{Rem}\label{rmk:DeltaDecreases}
Given a wall $\cW$ for $v$ in $U$,  let $w_1,\dots,w_m$ be the classes of the Jordan--H\"older factors of a  tilt-semistable object with class $v$ at a point of $\cW$. Then the version of Theorem~\ref{thm:BG} for tilt-stable objects  implies that $\overline{\Delta}(w_l)\leq\overline{\Delta}(v)$, for all $l=1,\dots,m$, with equality if and only if all $(w_l)_{\leq 2}$ and $v_{\leq 2}$ are proportional and $\overline{\Delta}(w_l)=\overline{\Delta}(v)=0$.
In particular, the structure sheaf $\cO_X$ or its shift $\cO_X[1]$ is tilt-stable everywhere in $U$.
\end{Rem}

\begin{Ex}
The equation $\overline{Q}_{\alpha,\beta}(v)=0$ defines a line containing $p(v)$. If it intersects $U$, we call the associated potential wall  the \emph{BG wall}, which  gives a bound on  walls for $v$.
\end{Ex}

\subsubsection*{The small volume limit point}
Assume that $\overline{\Delta}(v)>0$.
We define the two \emph{small volume limit points} $\overline{\beta}(v)$ as the points $(\beta^2/2,\beta)\in\R^2$ where the tangent to the parabola contains $p(v)$.
%These can also be defined in the presence of a continuous 1-periodic function $f\colon\R\to\R$ as in Conjecture~\ref{conj:generalizedBG}, where the set of them might be more complicate~\cite[Section~4.2]{Dulip:Fano}.

%\begin{wrapfigure}{l}{0.5\textwidth}
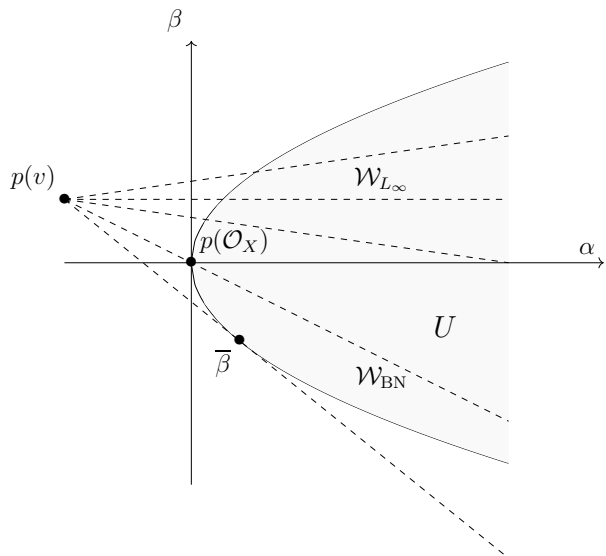
\begin{figure}%{0.5\textwidth}
%\centering
\begin{center}
\scalebox{0.8}{
\begin{tikzpicture}[x=30pt,y=30pt]

\draw[black] plot[smooth,samples=100,domain=0:5] (\x, {sqrt(2*\x)});
\draw[black] plot[smooth,samples=100,domain=0:5] (\x, -{sqrt(2*\x)});

\fill[gray!5!white, domain=0:5, variable=\x]
      (0, 0)
      -- plot ({\x}, {sqrt(2*\x)})
      -- (5, 0)
      -- cycle;

\fill[gray!5!white, domain=0:5, variable=\x]
      (0, 0)
      -- plot ({\x}, -{sqrt(2*\x)})
      -- (5, 0)
      -- cycle;
      
\draw[->] (-2,0) -- (6.5,0)node[above left] {$\alpha$};
\draw[->] (0,-3.5)-- (0,3.5) node[above left] {$\beta$};

\node at (0,0) {$\bullet$};
\node[above right] at (0, 0) {$p(\cO_X)$};
\node at (-2,1) {$\bullet$};
\node[above left] at (-2,1) {$p(v)$};
\node at (4,-1) {\large $U$};

\draw[dashed] (-2,1) -- (3,1)node[above] {$\cW_{L_\infty}$} -- (5,1);
\draw[dashed] (-2,1) -- (5,2);
\draw[dashed] (-2,1) -- (5,0);
\draw[dashed] (-2,1) -- (3,-1.5)node[below] {$\cW_{\mathrm{BN}}$} -- (5,-2.5);
\draw[dashed] (-2,1) -- (0.76,-1.23)node {$\bullet$} node[below left] {$\overline{\beta}$} -- (5,-4.66);

\end{tikzpicture}
}
\end{center}
\caption{Walls for $v$} \label{fig:U}
\end{figure}

\noindent
By the local finiteness of walls, they can accumulate only at the small volume limit points, and only finitely many lie outside a neighborhood of them.
Objects which are tilt-stable in a neighborhood of a small volume limit point are called \emph{$\overline{\beta}$-stable} and have strong vanishing properties, which are useful both in proving cases of Conjecture~\ref{conj:generalizedBG} or in applications of it.
For instance, if Conjecture~\ref{conj:generalizedBG} is true and the BG wall exists, there are no $\overline{\beta}$-stable objects for the corresponding small limit point. 
In the approach mentioned in the Introduction, the small limit point is a promising choice for a point of interest.

\subsubsection*{The large volume limit point}
As in Figure~\ref{fig:U}, consider the horizontal line
\[
L_{\infty}:=\left\{ \beta = v_1/v_0 \right\}.
\]
It is called the \emph{large volume limit wall} for $v$ and a point in it is called a \emph{large volume limit point}.
Object which are tilt-stable on a point in $U$ nearby the large volume limit wall---equivalently, for $\alpha \leadsto +\infty$---essentially correspond to Gieseker-stable sheaves or derived duals of them, according to which side of the wall we are. 

\subsubsection*{The Brill--Noether point}
The last point we are interested in helps studying global sections of objects.
In the K3 surface case, as discussed in Section~\ref{subsec:Curves}, this is where applications to Brill--Noether theory come from.
In tilt-stability, it is given by the potential wall $\cW_{\mathrm{BN}}$ associated to the line passing through $p(v)$ and $(0,0)$, see again Figure~\ref{fig:U}.
We call the point $(0,0)$ the \emph{Brill--Noether (BN) point}.
%Tilt-stability\todo{I don't understand this sentence} can be thought as associated to the central charge $Z_\mathrm{BN}(v)=-v_2+i\,v_1$ on the category $\Coh_{X,H}^0(X)$.

\subsection{Tilt-stability applications}\label{subsec:applications}

In this section, we give an informal exposition of three applications of tilt-stability and Conjecture~\ref{conj:generalizedBG}.

\subsubsection*{Curves on threefolds}\label{subsec:genus}

Let $X=\P^3$.
We want to study the following question, called the \emph{Halphen problem}: what is the maximal genus $g$ of an integral curve in $\P^3$ of degree $d$ which is not contained in a surface of degree $< k$? While the question is open for smaller $d$, a celebrated theorem of Gruson--Peskine and Harris~\cite{GP:genus,Harris:space-curves} gives such maximal genus $G(d,k)$, when $d>k(k-1)$
We give an idea how to reprove this theorem by using tilt-stability.
The approach works for any threefold which satisfies Conjecture~\ref{conj:generalizedBG} and a few extra assumptions, see~\cite[Theorem 1.2]{MS:genus} for the precise statement. For instance, it yields new results for principally polarized abelian threefolds of Picard rank~1.

Let $C$ be a curve as above whose genus is larger than the expected bound $G(d,k)$.
We look at the twisted ideal sheaf $\mathscr{I}_C(k-1)$ and we let $v=v(\mathscr{I}_C(k-1))$; here $\gamma=1$ and the generalized BG inequality holds, for all $(\alpha,\beta)\in U$.
The first step is a straightforward application of Conjecture~\ref{conj:generalizedBG}, which shows that in a neighborhood of the small volume limit point, there are no tilt-semistable objects with class $v$.
The second step is to use the information that $C$ is not contained in a surface of degree $k-1$, which says that the BN wall does not give a wall for $\mathscr{I}_C(k-1)$.
To summarize: $\mathscr{I}_C(k-1)$ is tilt-stable at the large volume limit, it must be destabilized at a certain wall, which cannot be on the BN wall.

To get a contradiction we need  to analyze the finitely many walls for $\mathscr{I}_C(k-1)$.
This is where the assumption $d>k(k-1)$ comes in.
In fact, possible destabilizing subobjects for $\mathscr{I}_C(k-1)$ are always reflexive sheaves: in the range $d>k(k-1)$ they have rank either~1 or~2.
The rank~1 case, namely invertible sheaves, can be dealt with by the fact that the BN wall is not an actual wall.
For the rank~2 case, we use once more a similar strategy to get bounds on the third Chern character of such rank~2 sheaves and thus a contradiction, since we have control on the discriminant, by Remark~\ref{rmk:DeltaDecreases}.

\subsubsection*{Higher rank DT invariants on CY3s}\label{subsec:DT}

Let $(X,H)$ be a complex polarized Calabi--Yau threefold.
In a recent sequence of papers~\cite{FT1,FT2}, Feyzbakhsh and Thomas proved the following theorem: if $(X,H)$ satisfies the generalized BG inequality on $U$, then the higher rank Donaldson--Thomas (DT) theory is completely governed by the rank~1 theory, i.e., Hilbert schemes of curves.
There is some flexibility on the assumption on the generalized BG inequality; in particular, their theorem holds for the examples of Calabi--Yau threefolds where Conjecture~\ref{conj:generalizedBG} has been proved, e.g.~for the quintic threefold.

\def\DT{\mathrm{DT}}
The fundamental idea is as follows. Fix a class $v$ of rank $r$. The  DT invariant $\DT_{\text{large-v}}(v)$ of tilt-semistable objects of class $v$ near the large-volume limit is essentially the classical DT invariant. Now let $n \gg 0$. The role of the point of interest is first played by a variant of the Brill--Noether wall: the Joyce--Song wall $\cW_\mathrm{JS}$ where $v$ and $\cO_X(-n)[1]$ have the same slope. This is a wall not for $v$ (where it is contained in the large-volume chamber), but for the class $v^{(n)} = v - v(\cO_X(-n))$ of rank $r-1$: there are objects $E$ of class $v^{(n)}$ destabilised by a short exact sequence $F \to E \to \cO_X(-n)[1]$ with $F \in M_{\alpha, \beta}(v)$. They are generically parameterised by a projective bundle over the DT moduli space for $v$.

Now consider the DT invariant $\DT_{(\alpha, \beta)}(v^{(n)})$ as $(\alpha, \beta)$ moves on a path from the small-volume limit---our point of interest---for $v^{(n)}$ to its large-volume limit. Conjecture~\ref{conj:generalizedBG} shows $\DT_{\text{small-v}}(v^{(n)}) = 0$. Applying the generalised BG inequality again, the authors show that \emph{except for the Joyce--Song wall}, all other walls are defined by sheaves of rank~$\leq r-1$. Thus, the Joyce--Song wall-crossing formula in \cite{Joyce-Song} gives a relation of the form
\begin{align*}
\DT_{\textrm{large-v}}(v^{(n)}) & =
\DT_{\textrm{small-v}}(v^{(n)}) + \text{Wall-crossing contributions}
\\ &= 0 + \mathrm{WallCr}(\text{lower rank DT invariants})
+ \chi(\cO_X(-n), v) \cdot \DT_{\text{large-v}}(v).
\end{align*}
This shows that $ \DT_{\text{large-v}}(v)$ is determined by lower rank DT invariants.

\subsubsection*{The quintic threefold and Clifford-type bounds}\label{subsec:quintic}

In the proof of the generalized BG inequality for the quintic threefold in~\cite{Li:Quintic3fold}, the first idea is the following: if we know the generalized BG inequality at the BN point, we know it everywhere (for an appropriately chosen function $f$ in Conjecture~\ref{conj:generalizedBG}).
Here, Li uses the same idea as in Section~\ref{subsec:Curves}: a stronger \emph{classical} Bogomolov--Gieseker type inequality for the quotient $Q$ in \eqref{eq:BNses} implies a bound for $h^0(E)$, and consequently for $\chi(E)$ and thus $\ch_3(E)$. The stronger bound for $Q$ is deduced by a restriction theorem from stronger bounds on $(2, 5)$-complete intersection surfaces. Using the logic of Section~\ref{subsec:Curves} in reverse, this bound is reduced to Clifford-type bounds for stable vector bundles on $(2, 2,5)$-complete intersection curves $C$. Now we consider the embedding $C \subset S_{2, 2}$ of $C$ into the $(2,2)$-complete intersection del Pezzo surface $S_{2, 2}$. Riemann-Roch directly implies a stronger Bogomolov--Gieseker inequality on
$S_{2, 2}$, and shifting the logic of Section~\ref{subsec:Curves} back to forward gear implies the desired Clifford bounds.

These Clifford bound arguments yield new results even for planar curves \cite{FL:Clifford}.

%%%%%%%%%%

\section{Further research directions}\label{sec:Further}

A proof of Conjecture~\ref{conj:generalizedBG},  and thus the existence of Bridgeland stability conditions on threefolds, would evidently be tremendous progress. We present here some more specific open questions related to the topic of our survey.

\subsubsection*{The quintic threefold and Toda's conjecture}
The picture for Bridgeland stability conditions is not yet complete, even for quintic threefolds.
The expectation in~\cite{Aspinwall:Dbranes-CY} (see also \cite[Remark 3.9]{Bri:ICM} for more details) is the following: there exists a closed embedding
\[
I\colon \mathfrak{M}_K \to \left[ \mathrm{Aut}(\Db(X))\backslash \Stab(\Db(X))/\C \right],
\quad \text{where} \quad \mathfrak{M}_K:=\left[ \{\psi\in\C\,:\,\psi^5\neq1\} / \mu_5\right]
\]
and $I(\psi)=(Z_\psi,\cP_\psi)$ where $Z_\psi$ is a solution of the associated Picard--Fuchs equation (see~\cite[Section~3.2]{Tod:Gepner}).
Li's Theorem in~\cite{Li:Quintic3fold} describes only a neighborhood of the large volume limit. The global picture would follow from an appropriate answer to the following question.
\begin{Ques} Is there a better bound for the
Le Potier function for quintic threefolds (extending \cite[Conjecture~1.2]{Tod:Gepner} for slope $\frac 12$ to arbitrary slope), and a version of Conjecture \ref{conj:generalizedBG} with $f(x)$ approximating this Le Potier function?
\end{Ques}

\subsubsection*{Higher dimension}
The first step towards stability conditions in higher dimension would be the following:
\begin{Ques}
Can we prove  Conjecture~\ref{conj:generalizedBG} by induction on the dimension of $X$ once it is known for (a suitable class of) threefolds?
\end{Ques}

\subsubsection*{Moduli spaces of polarized non-commutative varieties}

A natural extension of the results in Section~\ref{subsec:K3} would be to answer the following:

\begin{Ques}\label{ques:HK}
Is any smooth projective polarized irreducible holomorphic symplectic variety $(M,H)$ of $\mathrm{K3}^{[n]}$-type isomorphic to the moduli space $(M_\sigma(v),\ell_\sigma)$ of $\sigma$-stable objects for a stability condition $\sigma$ on a K3 category $\cD$?
\end{Ques}

A candidate for $\cD$ has been constructed in~\cite{MM:K3category}, over an open subset of the moduli space. It is not known if it can be realized as an admissible subcategory. 
The question is closely related to the following, completing the analogy between stability conditions and polarizations:

\begin{Ques}
Is there an algebraic moduli space of polarised non-commutative K3 surfaces, parameterising pairs $(\cD, \sigma)$ where $\cD$ is a K3 category deformation--equivalent to 
$\Db(S)$ for a projective K3 surface $S$, and $\sigma$ is a stability condition on $\cD$?
\end{Ques}

The theory of stability conditions in families developed in~\cite{families} provides a definition of morphisms to this moduli space.

\subsection*{Acknowledgements}
The work reviewed here benefited immensely from the input of our mentors, collaborators and friends.  Aaron Bertram envisioned many of the topics surveyed here and persistently encouraged us to explore them. Many of our (current or former) mentees carried these topics much further than we imagined.
We want to thank in particular Enrico Arbarello, Claudio Bartocci, Marcello Bernardara, Tom Bridgeland, Ugo Bruzzo, Fran\c{c}ois Charles, \.{I}zzet Co\c{s}kun, Alastair Craw, Olivier Debarre, Gavril Farkas, Tommaso de Fernex, Soheyla Feyzbakhsh, Laure Flapan, Brendan Hassett, Jack Huizenga, Daniel Huybrechts, Qingyuang Jiang, Naoki Koseki, Alexander Kuznetsov, Mart\'i Lahoz, Chunyi Li, Antony Maciocia, Yuri I.~Manin, Eyal Markman, Sukhendu Mehrotra, Howard Nuer, Kieran O'Grady, Tony Pantev, Alexander Perry, Laura Pertusi, Giulia Sacc\`a, Benjamin Schmidt, Paolo Stellari, Richard Thomas, Yukinobu Toda, Claire Voisin, Michael Wemyss, K\=ota Yoshioka, Ziyu Zhang and Xiaolei Zhao for their invaluable constant support.

%%%%%%%%%%

\end{document}